\newtheorem{dfn}{Definition} [section]
\newtheorem{theorem}[dfn]{Theorem}
\newtheorem{lemma}[dfn]{Lemma}
\newtheorem{corollary}[dfn]{Corollary}
\newtheorem{conjecture}[dfn]{Conjecture}
\newcommand\abs[1]{\lvert #1\rvert}
\def\X {{\mathcal X}}
\def\T {{\mathcal T}}
\def\Se {{\mathcal S}}
\newcommand\email[1]{\footnote{\texttt{#1}}}
\newcommand\td{tree-decom\-po\-si\-tion}
\newcommand\pd{path-decom\-po\-si\-tion}
\begin{document}

\title{Partitioning $H$-minor free graphs into three subgraphs with no large components}
\author{Chun-Hung Liu%
\email{chliu@math.princeton.edu}
\thanks{Partially supported by National Science Foundation under Grant No.\ DMS-1664593.}
 \\
\small Department of Mathematics, \\
\small Princeton University,\\
\small Princeton, NJ 08544, USA
\and 
Sang-il Oum%
\email{sangil@kaist.edu} %
\thanks{Supported by the National Research Foundation of Korea (NRF) grant funded by the Korea government (MSIT) (No. NRF-2017R1A2B4005020).}\\
\small Department of Mathematical Sciences,\\\small  KAIST, \\\small Daejeon, 34141 South Korea}
\date{August 28, 2017}
\maketitle

\begin{abstract}
We prove that for every graph $H$, if a graph $G$ has no (odd) $H$ minor, then its vertex set $V(G)$ can be partitioned into three sets $X_1$, $X_2$, $X_3$ such that for each~$i$, the subgraph induced on $X_i$ has no component of size larger than a function of~$H$ and the maximum degree of~$G$. This improves a previous result of Alon, Ding, Oporowski and Vertigan~(2003) stating that $V(G)$ can be partitioned into four such sets if $G$ has no $H$ minor.
Our theorem generalizes a result of Esperet and Joret~(2014), who proved it for graphs embeddable on a fixed surface and asked whether it is true for graphs with no $H$ minor.

As a corollary, we prove that for every positive integer $t$, if a graph $G$ has no $K_{t+1}$ minor, then its vertex set $V(G)$ can be partitioned into $3t$ sets $X_1,\ldots,X_{3t}$ such that for each~$i$, the subgraph induced on $X_i$ has no component of size larger than a function of~$t$. This corollary improves a result of Wood~(2010), which states that $V(G)$ can be partitioned into $\lceil 3.5t+2\rceil$ such sets.

\end{abstract}

\section{Introduction}
The famous Four Color Theorem states that every planar graph $G$ admits a partition of its vertex set into four sets $X_1,X_2,X_3,X_4$ such that for $1\le i\le 4$, every component of the subgraph~$G[X_i]$ induced on $X_i$ has at most one vertex. Certainly there are planar graphs whose vertex set cannot be partitioned into three such sets. 
However, Esperet and Joret~\cite{ej} proved that the number of sets can be reduced to three, if we relax each $X_i$ to induce a subgraph having no component of size larger than a function of the maximum degree of~$G$.

\begin{theorem}[{Esperet and Joret~\cite{ej}}] \label{3color surface}
Let $\Sigma$ be a surface of Euler genus $g$. 
If a graph $G$ is embeddable on $\Sigma$ and has maximum degree at most $\Delta$, then $V(G)$ can be partitioned into three sets $X_1,X_2,X_3$ such that for $1\le i\le 3$, every component of~$G[X_i]$ has at most $(5\Delta)^{2^g-1}(15\Delta)^{(32\Delta+8)2^g}$ vertices.
\end{theorem}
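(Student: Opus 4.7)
The plan is to prove Theorem~\ref{3color surface} by induction on the Euler genus $g$ of $\Sigma$. The shape of the bound, with a $2^g$ in both exponents, strongly suggests that each inductive step roughly doubles the permitted component size, which in turn hints at cutting along a single short non-contractible cycle at each stage.

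\textbf{Base case ($g=0$).} Here $G$ is planar of maximum degree at most $\Delta$, and the target bound is $(15\Delta)^{32\Delta+8}$. I would root a BFS tree at an arbitrary vertex, assign BFS layers to three color classes cyclically, and then refine within each class. The BFS layering alone forces every monochromatic edge to lie inside a single BFS layer, but those layers can be arbitrarily large, so a further subdivision is needed. Using planarity and the degree bound, one expects each thin band of at most $32\Delta+8$ consecutive layers to admit a decomposition along short arcs whose removal leaves pieces of size at most $(15\Delta)^{32\Delta+8}$; distributing these arcs among the three classes carefully then yields the desired partition.

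\textbf{Inductive step ($g\ge 1$).} I would first locate a non-contractible cycle $C$ in $G$ of length bounded in terms of $\Delta$ and $g$, which is possible because a graph of bounded maximum degree on a surface of positive Euler genus has bounded edge-width. Cutting $\Sigma$ along $C$ produces a surface $\Sigma'$ of strictly smaller Euler genus, and $G-V(C)$ embeds on $\Sigma'$, so the inductive hypothesis supplies a partition $X'_1,X'_2,X'_3$ of $V(G)\setminus V(C)$ with components bounded by the smaller-genus bound. One then extends this partition to $V(C)$ by assigning each vertex of $C$ to some class in a way that limits how components of $G'[X'_i]$ get fused.

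\textbf{Main obstacle.} The delicate step is the extension to $V(C)$. Each vertex $u\in V(C)$ placed in $X_i$ can merge up to $\Delta-1$ previously disjoint components of $G'[X'_i]$, and these mergers can cascade as one traverses $C$. The crux of the argument will be to design a coloring rule on $V(C)$ which forces every component of $G[X_i]$ to arise from at most two components of $G'[X'_i]$ joined by a short sub-arc of $C$; this controlled doubling is exactly what produces the $2^g$ in the exponent, while the multiplicative factor $5\Delta$ per genus unit would track the merging multiplicity along $C$. Ruling out an uncontrolled exponential blowup per vertex of $C$, rather than the single clean doubling demanded by the stated bound, is where the main technical difficulty would lie.
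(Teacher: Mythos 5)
The paper does not actually contain a proof of Theorem~\ref{3color surface}; it is quoted verbatim from Esperet and Joret~\cite{ej}, so there is no internal argument to compare against. I therefore assess your sketch on its own merits.

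Your inductive step rests on a false claim. It is not true that a graph of bounded maximum degree embedded on a surface of positive Euler genus has bounded edge-width: for every surface of positive Euler genus and every $\ell$, a sufficiently fine triangulation of that surface has maximum degree $6$ and yet every non-contractible cycle in it has length greater than $\ell$. Large face-width (the ``locally planar'' regime) is exactly the hard case in topological graph colouring, and no degree hypothesis can exclude it. Consequently, the short non-contractible cycle $C$ you propose to delete need not exist, and the genus induction cannot even begin. What saves the argument in the cited paper is that one does not need $C$ to be short in total length: a BFS tree rooted at any vertex has a fundamental cycle that is non-contractible, and although this cycle may be long, it meets each BFS layer in at most two vertices. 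That structural feature, combined with a colouring aligned with BFS layers, is what actually controls the ``cascading mergers'' you correctly identify as the obstacle; the assumption of bounded edge-width is a red herring. Your base case is likewise only a guess at the shape of the argument: ``one expects each thin band $\ldots$ to admit a decomposition along short arcs'' asserts the planar estimate without a mechanism. The engine there is that a bounded number of consecutive BFS layers of a planar graph induces a subgraph of bounded tree-width, to which Theorem~\ref{3color bdd tw} applies directly; the constants $32\Delta+8$ and $15\Delta$ in the exponent come from that tree-width bound together with the book-keeping needed to alternate colour-pairs between bands. Until the bounded-edge-width claim is replaced by the BFS-fundamental-cycle lemma and the band/tree-width argument is made explicit, neither half of the sketch constitutes a proof.
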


The number of sets in Theorem~\ref{3color surface} is best possible, since a $k \times k$ triangular grid has maximum degree six but its vertex set cannot be partitioned into two sets such that each set induces a subgraph with no component of size less than $k$ by the famous HEX lemma~\cite{Gale1979}.
In contrast, Alon, Ding, Oporowski, and Vertigan~\cite{adov} showed that for graphs of bounded tree-width and bounded maximum degree, it is possible to partition the vertex set into two sets inducing subgraphs having no large components.

\begin{theorem}[{Alon et al.~\cite[Theorem 2.2]{adov}}%
\footnote{In \cite{adov}, Theorem~\ref{3color bdd tw} is stated without requiring $w\ge 3$. However, \cite{adov} cites \cite[(3.7)]{DO1995}, which requires $w\ge 3$.
However, Theorem~\ref{3color bdd tw} is true even if $w< 3$, because a stronger statement was proved by Wood~\cite{Wood2009}.}] \label{3color bdd tw}
Let $w\ge 3$ and $\Delta$ be positive integers.
If a graph $G$ has maximum degree at most $\Delta$ and tree-width at most $w$, then $V(G)$ can be partitioned into $X_1$, $X_2$ such that for $1\le i\le2$, every component of~$G[X_i]$ has at most $24w\Delta$ vertices.
\end{theorem}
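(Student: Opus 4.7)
}

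The plan is to reduce the statement to the existence of a \emph{tree-partition} of small width. Recall that a tree-partition of a graph $G$ is a pair $(T,(P_t)_{t\in V(T)})$ where $T$ is a tree and $(P_t)_{t\in V(T)}$ is a partition of $V(G)$ into (possibly empty) parts such that for every edge $uv\in E(G)$ there is either a single node $t$ with $u,v\in P_t$ or an edge $tt'\in E(T)$ with $u\in P_t$ and $v\in P_{t'}$. The \emph{width} of the tree-partition is $\max_{t\in V(T)}\abs{P_t}$. The key external input I would quote is the theorem of Ding and Oporowski \cite{DO1995} (the result cited via \cite[(3.7)]{DO1995} in the footnote) stating that every graph with tree-width at most $w\ge 3$ and maximum degree at most $\Delta$ admits a tree-partition of width at most $24w\Delta$.

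Granting that result, the rest is essentially a one-line argument. First I would invoke the Ding--Oporowski theorem to fix a tree-partition $(T,(P_t)_{t\in V(T)})$ of $G$ of width at most $24w\Delta$. Then, since $T$ is bipartite, I would choose a proper $2$-colouring $V(T)=A_1\cup A_2$ of $T$ and define
\[
X_i \;=\; \bigcup_{t\in A_i} P_t \qquad (i=1,2).
\]
This is clearly a partition of $V(G)$. To bound component sizes, take any $v\in X_i$, say $v\in P_t$ with $t\in A_i$. Any neighbour $u$ of $v$ in $G[X_i]$ lies in some part $P_{t'}$ with $t'\in A_i$, and by the definition of a tree-partition, $t'$ must equal $t$ or be adjacent to $t$ in $T$. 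Since $A_i$ is an independent set of $T$, only the first alternative is possible, so $u\in P_t$. Iterating, the whole component of $v$ in $G[X_i]$ is contained in $P_t$, and hence has at most $24w\Delta$ vertices, as required.

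The only genuine work, and the step I expect to be the true obstacle, is the Ding--Oporowski theorem itself, whose proof takes a tree-decomposition of width $w$ and carefully refines it into a tree-partition by bundling together vertices that appear in many common bags, using the bounded degree condition to control how much this bundling can blow up the parts. Once that input is available, the two-colouring step is immediate from the bipartiteness of trees, and there is no dependence on planarity, surface embeddability, or any minor-closed property, which is why the bound depends only on $w$ and $\Delta$.
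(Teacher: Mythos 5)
Your proof is correct and takes exactly the route indicated by the paper's footnote: the statement is cited from Alon et al., who derive it from the Ding--Oporowski bound of $24w\Delta$ on tree-partition-width, and the reduction you give (proper $2$-colouring of the underlying tree of the tree-partition, so each monochromatic component of $G$ is trapped in a single part) is the standard and intended argument.
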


It was pointed out by Esperet and Joret {[private communication, 2015]} that the condition of maximum degree mentioned in Theorem \ref{3color bdd tw} cannot be removed.
See Theorem \ref{no bdd diameter} for details.

Though it is impossible to partition all planar graphs of bounded maximum degree into two induced subgraphs with components of bounded size, it is possible to partition them such that the tree-width of every component is small.
More precisely, DeVos, Ding, Oporowski, Sanders, Reed, Seymour, and Vertigan~\cite{ddosrsv} proved the following result, which was conjectured by Thomas~\cite{t}.
A graph $H$ is a \emph{minor} of a graph $G$ if a graph isomorphic to $H$ can be obtained from a subgraph of  $G$ by contracting edges. 

\begin{theorem}[{DeVos et al.~\cite{ddosrsv}}] \label{2color into bdd tw}
  For every graph $H$, there exists an integer~$N$ such that if $H$ is not a minor of~$G$, then $V(G)$ can be partitioned into two sets $X_1$, $X_2$ such that the tree-width of~$G[X_i]$ is at most $N$ for $1\le i\le 2$.
\end{theorem}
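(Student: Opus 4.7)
The plan is to invoke the Graph Minor Structure Theorem of Robertson and Seymour, which provides constants $k=k(H)$ and $g=g(H)$ such that every graph with no $H$ minor admits a \td\ $(T,\{V_t\}_{t\in V(T)})$ of adhesion at most $k$ whose torsos are $k$-nearly embeddable in a surface of Euler genus at most $g$. The proof then has three layers: a global stitching argument along $T$, absorption of apex vertices and vortices at the torso level, and a core partition argument for graphs embedded in a fixed surface.

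First I would argue that it suffices to partition each torso $G_t$ into two parts of bounded tree-width in a way that is \emph{consistent} on the adhesion sets shared with adjacent torsos. Given such consistent local partitions, glueing them along $T$ yields a partition of $V(G)$, and the local \td s of the torso pieces can be combined into a global \td\ of each $G[X_i]$, with an additive blow-up of $k$ absorbing the adhesion sets. Inside a torso I would then handle the at most $k$ apex vertices by adding them to every bag of the eventual \td\ (costing an additive $k$), and graft the bounded-width \pd s of the vortices into the final \td\ (costing another additive constant). This reduces the torso-level problem to the case of a graph $G_0$ genuinely embedded in a surface $\Sigma$ of Euler genus at most $g$, with a bounded set of adhesion vertices whose colors are prescribed.

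For such $G_0$, I would pick a root $r$ and run BFS, obtaining layers $L_0,L_1,\ldots$. Fixing a constant $c$ depending only on $g$, I would group layers into bands $B_j = L_{jc}\cup\cdots\cup L_{(j+1)c-1}$ and set $X_1=\bigcup_{j\text{ even}}B_j$ and $X_2=\bigcup_{j\text{ odd}}B_j$. Because edges of $G_0$ only go within a layer or between consecutive layers, different bands of the same color share no edges, so $G_0[X_i]$ is a disjoint union of subgraphs of the form $G_0[B_j]$. Each $G_0[B_j]$ has tree-width at most a function of $g$ and $c$: contracting $L_0\cup\cdots\cup L_{jc-1}$ along the BFS tree gives a graph still embedded in $\Sigma$ whose radius (from the contracted vertex) is at most $c$, and graphs of bounded radius on a fixed surface have tree-width bounded in terms of the radius and $g$ by a classical theorem of Robertson and Seymour.

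The principal obstacle is the consistency requirement between adjacent torsos, because the BFS-band coloring is globally determined by $r$ and $c$ and cannot freely accommodate prescribed colors on a small set of adhesion vertices. I would resolve this by rooting $T$, processing torsos top-down, and recoloring each parent adhesion vertex to match the already-fixed parent choice at the cost of adding those at most $k$ vertices to every bag of the local \td, which again contributes only an additive $k$ to the width. A secondary technical task is bookkeeping the additive contributions from apex vertices, vortex grafting, and adhesion stitching, but each is a bounded function of $H$; combining them yields the final bound $N=N(H)$.
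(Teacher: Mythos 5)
Your high-level outline is the standard one for this theorem: invoke the Graph Minor Structure Theorem, reduce to torsos, absorb apices, BFS-layer the surface part and alternate bands, then stitch along the clique-sum tree. The surface layer of the argument is sound (alternating bands of width $c$ give induced subgraphs that are disjoint unions of bounded-radius pieces on $\Sigma$, and bounded radius plus bounded genus gives bounded tree-width), and the top-down recoloring of at most $k$ adhesion vertices per torso, added to every bag of the local decomposition, does handle the consistency and stitching concerns at an additive cost. Adding apex vertices to every bag is likewise fine.

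The genuine gap is the sentence ``graft the bounded-width path-decompositions of the vortices into the final tree-decomposition (costing another additive constant).'' A vortex can have unboundedly many vertices, and its society $\bar\Omega=\{v_1,\dots,v_n\}$ lies on a face of $\Sigma$; under your BFS layering those society vertices can spread over arbitrarily many bands $B_j$. If you put the whole vortex interior in $X_1$, it then simultaneously attaches to many distinct bands of colour $1$, which without the vortex are separate bounded-tree-width components; the union of a bounded-path-width graph with several bounded-tree-width graphs glued along an unbounded set of shared vertices can have unbounded tree-width, so there is no automatic ``additive constant.'' Even if you first arrange (by adjusting band boundaries, or by contracting each vortex face before the BFS) that all society vertices of a single vortex land in one band, you still need to show that the band's tree-decomposition contains a path in which $v_1,\dots,v_n$ appear in their cyclic face order, so that the vortical path-decomposition can be merged bag-by-bag. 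That compatibility is exactly what the necklace lemmas in this paper encode (Lemmas~\ref{tw necklace} and~\ref{tw extended necklace}, the latter taken from Diestel and Thomas): one must first establish that the face cycle together with the relevant cross-edges forms a $q$-necklace, and only then does gluing the vortex cost a multiplicative $q$ in width. In short, the vortex step is where most of the technical work of DeVos et al.\ lives, and as written your proposal skips it.
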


Alon, Ding, Oporowski, and Vertigan~\cite{adov} combined Theorems~\ref{3color bdd tw} and~\ref{2color into bdd tw} to prove the following theorem.

\begin{theorem}[{Alon et al.~\cite[Theorem 6.7]{adov}}] \label{4color minor-closed}
 For every graph $H$ and every positive integer $\Delta$, there exists an integer $N$ such that if $H$ is not a minor of a graph $G$ of the maximum degree at most $\Delta$, then $V(G)$ can be partitioned into four sets $X_1,X_2,X_3,X_4$ such that for $1\le i\le4$, every component of~$G[X_i]$ has at most $N$ vertices.
\end{theorem}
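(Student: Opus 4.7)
The plan is to combine Theorems~\ref{3color bdd tw} and~\ref{2color into bdd tw}, exactly as suggested by the sentence preceding the statement. The argument is essentially a two-step refinement: first partition into two parts with bounded tree-width, then split each of those parts further using the small-components result for bounded tree-width.

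First, I would apply Theorem~\ref{2color into bdd tw} to~$G$ to obtain an integer $N_0=N_0(H)$ depending only on~$H$ and a partition $V(G)=Y_1\cup Y_2$ such that $G[Y_1]$ and $G[Y_2]$ each have tree-width at most $N_0$. This step uses only that $H$ is not a minor of $G$; the maximum degree hypothesis is not needed here.

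Next, observe that each $G[Y_i]$ is a subgraph of $G$ and hence has maximum degree at most $\Delta$. Set $w=\max(N_0,3)$, so that $G[Y_i]$ has tree-width at most $w$ and maximum degree at most $\Delta$. Applying Theorem~\ref{3color bdd tw} to each $G[Y_i]$ yields a partition $Y_i=Z_{i,1}\cup Z_{i,2}$ such that every component of $G[Z_{i,j}]$ has at most $24w\Delta$ vertices. Setting $X_1=Z_{1,1}$, $X_2=Z_{1,2}$, $X_3=Z_{2,1}$, $X_4=Z_{2,2}$, and $N=24\max(N_0(H),3)\Delta$, we obtain the required four-part partition, since $G[X_k]=G[Z_{i,j}]$ for the corresponding indices.

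Because both ingredients are cited results stated earlier in the excerpt, there is no substantive obstacle here beyond bookkeeping. The only point worth mentioning is the need to take $w=\max(N_0,3)$ in order to apply Theorem~\ref{3color bdd tw} when $N_0<3$, and the observation that maximum-degree and tree-width bounds are inherited by induced subgraphs, so that the hypotheses of Theorem~\ref{3color bdd tw} are satisfied on each $G[Y_i]$ without further work.
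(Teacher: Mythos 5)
Your proposal is correct and follows exactly the approach the paper indicates: the sentence preceding the theorem states that Alon et al.\ obtained it by combining Theorems~\ref{3color bdd tw} and~\ref{2color into bdd tw}, which is precisely your two-step refinement. The bookkeeping (taking $w=\max(N_0,3)$ and noting that tree-width and maximum degree are inherited by induced subgraphs) is handled correctly.
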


In this paper, we prove the following strengthening of Theorems~\ref{3color surface} and \ref{4color minor-closed} and answer a question of Esperet and Joret~\cite[Question 5.1]{ej}.

\begin{theorem} \label{3color minor free}
  For every graph $H$ and every positive integer $\Delta$, there exists an integer $N$ such that if $H$ is not a minor of a graph $G$ of the maximum degree at most $\Delta$, then $V(G)$ can be partitioned into three sets $X_1,X_2,X_3$ such that for $1\le i\le 3$, every component of~$G[X_i]$ has at most $N$ vertices.
\end{theorem}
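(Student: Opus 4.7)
The plan is to strengthen the proof of Alon et al.'s Theorem~\ref{4color minor-closed}, which combines Theorems~\ref{2color into bdd tw} and~\ref{3color bdd tw} to obtain a 4-partition, by saving one color and fusing two of the four resulting parts into one. First, apply Theorem~\ref{2color into bdd tw} to get a partition $V(G) = A \cup B$ where both $G[A]$ and $G[B]$ have tree-width at most some $w = w(H)$. Next, apply Theorem~\ref{3color bdd tw} to $G[A]$ to partition $A = X_1 \cup A'$ so that every component of $G[X_1]$ and $G[A']$ has at most $N_1 = N_1(w,\Delta)$ vertices. Take $X_1$ as the first part of the target 3-partition.

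It then remains to partition $A' \cup B$ into two parts $X_2 \cup X_3$ with components of bounded size in $G$. My approach is to show that $G[A' \cup B]$ itself has tree-width bounded only in terms of $H$ and $\Delta$, and then invoke Theorem~\ref{3color bdd tw} once more. Starting from a tree-decomposition $(T,\{V_t\})$ of $G[B]$ of width at most $w$, I would attach each component $C$ of $G[A']$ --- which has size at most $N_1$ and hence at most $N_1\Delta$ neighbors in $B$ --- by adding all vertices of $C$ to the bag of every $t$ in the minimal subtree $T_C \subseteq T$ whose bags collectively contain the $B$-neighborhood of $C$. A routine check shows this yields a valid \td\ of $G[A'\cup B]$: the bags containing any fixed vertex of $A'$ form the subtree $T_C$, and every edge from $C$ to $B$ is covered because each neighbor of $C$ in $B$ already appears in some bag indexed by $T_C$.

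The main obstacle is bounding the resulting bag sizes uniformly. While only at most $(w+1)\Delta$ components $C$ of $G[A']$ can have a neighbor in any given bag $V_t$ (contributing at most $(w+1)\Delta\cdot N_1$ extra vertices there), many further components may pass through $V_t$ solely because $t$ lies on a path in $T$ between two bags containing neighbors of $C$, and this indirect congestion is not a priori bounded. To control it I would choose the \td\ of $G[B]$ carefully --- for instance in lean form in the sense of Thomas, so that subtrees $T_C$ cannot be stretched unnecessarily --- or, more robustly, strengthen Theorem~\ref{2color into bdd tw} to produce a single \td\ of $G$ of bounded adhesion that simultaneously witnesses bounded tree-width on both sides of the partition $A \cup B$, in which case the attachment analysis can be performed bag-by-bag with uniform control. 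Once $G[A'\cup B]$ is shown to have bounded tree-width, applying Theorem~\ref{3color bdd tw} to it yields $A' \cup B = X_2 \cup X_3$ with components of bounded size, completing the 3-partition.
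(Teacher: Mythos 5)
Your central lemma --- that $G[A'\cup B]$ has tree-width bounded in terms of $H$ and $\Delta$ --- is false, and the proposal collapses at that point regardless of how the attachment of the $A'$-components to the tree-decomposition of $G[B]$ is organized. The issue is not congestion in the particular decomposition you build; the tree-width is genuinely unbounded. Take $G$ to be a $k\times k$ grid (planar, hence $K_5$-minor free, $\Delta=4$), $A$ the odd columns and $B$ the even columns, so $G[A]$ and $G[B]$ are unions of paths and have tree-width $1$. Applying Theorem~\ref{3color bdd tw} to $G[A]$ produces a $2$-colouring in which, say, $X_1$ consists of the odd-column vertices whose row index lies in even blocks of length $\ell$ and $A'$ the odd-column vertices in odd blocks. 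Then every row whose index lies in an $A'$-block is entirely contained in $A'\cup B$, and every even column is entirely contained in $B\subseteq A'\cup B$; these roughly $k/2$ complete rows and $k/2$ complete columns yield a grid minor of order about $k/2$ inside $G[A'\cup B]$, so its tree-width grows with $k$. A lean tree-decomposition of $G[B]$ cannot rescue this, and a hypothetical common tree-decomposition of $G$ of bounded adhesion whose restriction to each of $A$ and $B$ has bounded width would force $G$ itself to have bounded tree-width, which the grid also refutes. Intuitively, to make $G-X_1$ have bounded tree-width one would need $X_1$ to be a dense ``mesh'' hitting every large grid minor, but such a set necessarily induces large connected pieces, so the two requirements on $X_1$ fight each other --- this is essentially why the planar case already requires the substantive Esperet--Joret argument rather than a reduction to bounded tree-width.

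For contrast, the paper does not attempt to reduce to bounded tree-width. It proves Theorem~\ref{3color minor free} as a corollary of the odd-minor version (Theorem~\ref{3color odd minor free}), which is established by induction on $|V(G)|$ with a precoloured set $Y$, using tangles and the Robertson--Seymour structure theorem (via Corollary~\ref{stronger excluding minor} and Theorem~\ref{structure thm for excluding minor locally planar}) to decompose $G$ into an apex set, a surface part with a skeleton, and a bounded number of vortices. The surface part is $3$-coloured by Theorem~\ref{3color surface} (Esperet--Joret), the vortices are handled through $q$-necklaces and the bounded-tree-width Theorem~\ref{3color bdd tw} applied to auxiliary graphs $H_i$, and the small pieces hanging off the segregation are coloured by induction; the odd-minor structure theorem of Geelen et al.\ handles the case where a $K_t$-minor is controlled. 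The machinery is heavier precisely because the ``save one colour'' step cannot be localized to a bounded-tree-width subproblem in the way your plan envisions.
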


\paragraph{Strengthening to odd minors}
Indeed, we prove a stronger theorem in terms of odd minors as follows.
A graph $H$ is an \emph{odd minor} of a graph $G$ if there exists a set $\{(T_v)\}_{v\in V(H)}$ of vertex-disjoint subgraphs of $G$ that are trees such that 
each tree $T_v$ is properly colored by colors $1$ and $2$
and for each edge $uw$ of $H$, there exists an edge joining $T_u$ and $T_w$ whose ends have the same color.

\begin{theorem} \label{3color odd minor free}
  For every graph $H$ and every positive integer $\Delta$, there exists an integer $N$ such that if $H$ is not an \emph{odd} minor of a graph $G$ of the maximum degree at most $\Delta$, then $V(G)$ can be partitioned into three sets $X_1,X_2,X_3$ such that for $1\le i\le 3$, every component of~$G[X_i]$ has at most $N$ vertices.
\end{theorem}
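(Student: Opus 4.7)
My plan is to combine a structure theorem for odd $H$-minor-free graphs with a strengthened, ``extendable'' form of Theorem~\ref{3color surface}, and then patch local 3-colorings together via a tree-decomposition. The first step is to invoke a structure theorem in the spirit of Geelen--Gerards--Reed--Seymour--Vetta for graphs with no odd $H$-minor: after deleting a bounded number of ``apex'' vertices and absorbing a bipartite-like switching operation at a bounded number of further vertices, each torso becomes almost-embeddable on a surface of Euler genus bounded in terms of $H$, with a bounded number of vortices of bounded depth. The adhesions between adjacent torsos may be assumed to have size bounded in terms of $H$.

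The second step is a local, extendable 3-coloring lemma: for every almost-embeddable graph $G_0$ of maximum degree at most $\Delta$ and every prescribed 3-coloring of a bounded-size ``boundary'' set $S\subseteq V(G_0)$, the partial coloring on $S$ extends to a 3-coloring of $V(G_0)$ in which every monochromatic component has size bounded by a function of $H$, $\Delta$, and $|S|$. I would prove this by assigning the apex and switching vertices a single color class (since they are few and of bounded degree, the resulting monochromatic component is small), applying Theorem~\ref{3color bdd tw} inside each vortex (which has bounded path-width and bounded max degree), and using Theorem~\ref{3color surface} on the remaining embedded core. The genuinely new ingredient is the extension to a prescribed coloring on~$S$; I would enforce this by placing a BFS buffer around $S$ of bounded thickness (kept of bounded total size by the bounded max degree) in which colors are chosen to be compatible with $S$ while severing any large monochromatic components near~$S$.

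The third step glues the local 3-colorings into a global 3-coloring of~$G$. Orient the tree-decomposition from a root and process it in root-to-leaf order: at each non-root torso, I would fix the 3-coloring of the adhesion to the parent (inherited from the parent's already-constructed coloring) and apply the local lemma to extend across the torso. Because both adhesion sizes and the maximum degree are bounded, only a bounded-size neighborhood of each adhesion can carry monochromatic propagation across it, which is enough to bound the size of global monochromatic components in terms of $H$ and $\Delta$ alone.

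The most serious obstacle is the local 3-coloring lemma of the second step: the Esperet--Joret argument for surfaces is already delicate, and one must simultaneously accommodate apex vertices, vortices, the switching vertices arising from the odd-minor structure theorem, and an arbitrary prescribed boundary coloring. If the clean extension property proves elusive, a workable fallback is to relax the lemma to allow a bounded-size ``repair set'' of free vertices in each torso, and to absorb those repair sets into the apex sets during the gluing step so that they do not accumulate across the tree-decomposition.
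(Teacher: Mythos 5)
Your high-level plan --- a global tree-decomposition structure theorem for odd-$H$-minor-free graphs, a local extendable $3$-coloring lemma for each torso, and root-to-leaf gluing along the decomposition tree --- follows a route genuinely different from the paper's. The paper never decomposes the whole graph along a tree. Instead it runs an induction on $\lvert V(G)\rvert$ parameterized by a precolored ``boundary'' set $Y$, uses $Y$ to \emph{define} a tangle of order $\theta$ (Claims 1--3 in its proof), applies a tangle/segregation structure theorem (Theorem~\ref{structure thm for excluding minor locally planar}) yielding a single surface arrangement of $\rho$-vortices and small societies around one ``extended skeleton,'' colors that skeleton via Lemma~\ref{mono comp local}, and then recurses only into segregation pieces, each of which carries a precolored boundary of size $O(\theta\Delta)$ and sits in a geometrically isolated, $d$-local region of the surface. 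Geelen--Gerards--Reed--Seymour--Vetta (Theorem~\ref{thm:oddminor}) enters only when the tangle controls a $K_t$-minor, where the bipartite block is properly $2$-colored and the small apex set $X$ takes the third color. You have the right list of ingredients (Esperet--Joret, Alon et al., GGRSV), but the gluing geometry is quite different: flat, on one surface, rather than recursive along a decomposition tree of unbounded depth.

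The genuine gap is step~3. You assert that because adhesion sizes and the maximum degree are bounded, ``only a bounded-size neighborhood of each adhesion can carry monochromatic propagation across it,'' and that this suffices to bound global monochromatic components. It does not. A monochromatic component can have a small footprint inside \emph{every single torso} and still thread, bag after bag, down an unbounded-length branch of the decomposition tree, because nothing in your argument caps the \emph{number} of torsos it visits. This is exactly the chain obstruction that historically forced a fourth color (Alon et al., and Demaine--Hajiaghayi--Kawarabayashi combined with Alon et al.\ in the odd-minor setting); defeating it is the entire content of the $3$-color result. Closing the gap requires a quantitative invariant much sharper than ``bounded component size in each torso,'' one that stays controlled as the recursion descends. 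In the paper this is condition~(i) of the stronger theorem --- the union of monochromatic components meeting $Y$ has at most $\lvert Y\rvert^2\Delta$ vertices --- and it is enforced by the tangle-derived separations in Claims~1--3, which guarantee that the precolored boundary passed to each recursive call is small relative to $Y$. Your step~2 extension lemma is also nontrivial to prove (Esperet--Joret has no stated extension property, and coordinating a prescribed boundary coloring with a vortex, apexes, and a bipartite switching part is delicate), but the BFS-buffer instinct is sound; note that the paper sidesteps proving a fully extendable surface lemma and instead recolors a bounded set after the fact via the elementary Lemma~\ref{recolor vertex}.
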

Since every odd minor of a graph $G$ is also a minor of $G$, Theorem~\ref{3color odd minor free} trivially implies Theorem~\ref{3color minor free}.

Interestingly Demaine, Hajiaghayi, and Kawarabayashi~\cite{DHK2010} proved 
a result analogous to Theorem~\ref{2color into bdd tw} for odd minors, claiming that graphs with no odd $H$-minor can be partitioned into two induced subgraphs of bounded tree-width. This with Theorem~\ref{3color bdd tw} would imply that graphs with no odd $H$-minor having bounded maximum degree can be partitioned into $4$ induced subgraphs each having no large components. Theorem~\ref{3color odd minor free} reduces the number of induced subgraphs from four to three.

\paragraph{Applications to a weaker version of Hadwiger's conjecture}
As an application of Theorem~\ref{3color minor free}, we investigate the following relaxation of Hadwiger's conjecture: what is the minimum $k$ as a function of~$t$ such that  
for some $N$, every graph $G$ with no $K_{t+1}$ minor admits a partition of $V(G)$ into $k$ sets $X_1,X_2,\ldots,X_k$ with the property that each $G[X_i]$ has no component on more than $N$ vertices?
Hadwiger's conjecture~\cite{hadwiger}, if true, would imply that $k=t$ (with $N=1$).
Kawarabayashi and Mohar~\cite{km} proved that $k \leq \lceil 15.5(t+1) \rceil$, and Wood~\cite{w} proved that $k \leq \lceil 3.5t+2 \rceil$.
We improve these results by using a recent result of Edwards, Kang, Kim, Oum, and Seymour~\cite{ekkos}.

\begin{theorem}[\cite{ekkos}] \label{partitioning into bounded max deg}
For every positive integer $t$, there exists $s$ such that if $K_{t+1}$ is not a minor of a graph $G$, then $V(G)$ can be partitioned into $t$ sets $X_1,X_2,\ldots,X_{t}$ such that for $1\le i\le t$, $G[X_i]$ has maximum degree at most~$s$.
\end{theorem}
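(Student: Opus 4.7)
The goal is to show that every $K_{t+1}$-minor-free graph admits a partition into $t$ classes of bounded maximum in-class degree. My first move would be to invoke the Kostochka--Thomason bound on the average degree of minor-free graphs, which implies that $G$ is $d$-degenerate for some $d = O(t\sqrt{\log t})$. Fixing a degeneracy ordering of $V(G)$ and assigning each vertex greedily to the class currently containing the fewest of its earlier neighbors gives, by pigeonhole, at most $d/t$ earlier same-class neighbors per vertex.

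This greedy step controls only the backward direction; to handle the forward same-class neighbors, I would then perform a local swap phase. Define the potential $\Phi = \sum_{i=1}^t e(G[X_i])$ and repeatedly move any vertex $v$ with same-class degree exceeding a threshold $s=s(t)$ to a class strictly decreasing $\Phi$. Since $\Phi$ is a nonnegative integer, the procedure terminates. At termination, every $v \in X_i$ satisfies $|N(v) \cap X_i| \le |N(v) \cap X_j|$ for each $j$, hence $|N(v) \cap X_i| \le \deg_G(v)/t$. This is strong for low-degree vertices but empty for high-degree ones, so the heart of the argument must show that high-degree vertices cannot be stuck with high same-class degree.

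The key---and likely the hardest step---is to argue that if a vertex $v$ has too many same-class neighbors and no move decreases $\Phi$, then a $K_{t+1}$-minor can actually be built around $v$, contradicting the hypothesis. Constructing the $t$ mutually adjacent connected branch sets needed for such a minor from the local rigidity of the partition is delicate, especially when $G$ has no bound on maximum degree. I would try to reduce first via Theorem~\ref{2color into bdd tw} to graphs of bounded tree-width, where branch sets can be assembled along a tree-decomposition and the propagation of local structure is controllable; alternatively, the Robertson--Seymour structure theorem for $H$-minor-free graphs gives a near-planar decomposition on which a direct inductive construction on $t$ should be feasible, at the cost of considerable bookkeeping along clique-sums.
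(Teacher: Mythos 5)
This theorem is not proved in the paper; it is quoted verbatim from Edwards, Kang, Kim, Oum, and Seymour~\cite{ekkos}, so there is no in-text argument to compare your proposal against. Evaluating your proposal on its own terms, there is a genuine gap exactly where you flag ``the heart of the argument.''

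The preliminary portion of your sketch is correct but does not carry any weight. Once you pass to a partition at which no single-vertex move decreases $\Phi=\sum_i e(G[X_i])$, the inequality $|N_G(v)\cap X_i|\le\deg_G(v)/t$ holds for every $v\in X_i$, and this already subsumes the Kostochka--Thomason degeneracy plus greedy-ordering step, which bounds only backward degree. The crux, as you yourself observe, is that this bound is vacuous for high-degree vertices, and one must show that a vertex $v$ that is stuck with large in-class degree at a locally optimal partition forces a $K_{t+1}$-minor. Your proposal never constructs that minor; it asserts that it should exist and then defers to two suggested reductions, neither of which is carried out and neither of which obviously simplifies the problem. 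Splitting $V(G)$ via Theorem~\ref{2color into bdd tw} into two bounded-tree-width parts does not remove the difficulty: a graph of bounded tree-width can still have unbounded maximum degree, so after the swap phase the very same obstruction (high-degree vertices with large in-class degree) reappears inside each part; and even if it could be overcome there, you would still need to divide each part into only $\lceil t/2\rceil$ classes, whereas the tree-width bound supplied by Theorem~\ref{2color into bdd tw} is a function of $H$ that is far larger than $t$, so a naive degeneracy/pigeonhole count would demand many more than $\lceil t/2\rceil$ classes. Appealing to the Robertson--Seymour structure theorem with ``a direct inductive construction on $t$'' and ``considerable bookkeeping along clique-sums'' is a description of a hoped-for proof, not a proof.

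It is also worth noting that the actual argument in~\cite{ekkos} uses neither the bounded tree-width reduction nor the structure theorem: it is a short, self-contained extremal argument that works directly with a carefully chosen locally optimal partition and builds a $K_{t+1}$-minor from the neighborhood of a vertex of maximal in-class degree, which is what yields an explicit bound on $s$. So the step you mark as ``likely the hardest'' is in fact essentially the entire theorem, and your proposal does not close it.
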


\begin{theorem}
For every positive integer $t$, there exists $N$ such that if $K_{t+1}$ is not a minor of a graph $G$, then $V(G)$ can be partitioned into $3t$ sets $X_1,X_2,\ldots,X_{3t}$ such that for $1\le i\le 3t$, every component of~$G[X_i]$ has at most $N$ vertices.
\end{theorem}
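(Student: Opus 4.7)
The plan is to combine Theorem~\ref{partitioning into bounded max deg} with the main result Theorem~\ref{3color minor free} in the obvious two-step way, so the corollary is almost immediate and there is no real obstacle beyond invoking these two results.

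First I would apply Theorem~\ref{partitioning into bounded max deg} to $G$: since $K_{t+1}$ is not a minor of $G$, there exists an integer $s=s(t)$ and a partition of $V(G)$ into $t$ sets $Y_1,\ldots,Y_t$ such that for each $i$, the induced subgraph $G[Y_i]$ has maximum degree at most $s$. Note that $s$ depends only on $t$.

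Next, for each $1 \le i \le t$, observe that $G[Y_i]$ is a subgraph of $G$ and hence has no $K_{t+1}$ minor. So I may apply Theorem~\ref{3color minor free} to $G[Y_i]$ with $H := K_{t+1}$ and $\Delta := s$, obtaining an integer $N=N(t,s)=N(t)$ (independent of $i$, since $H$ and $\Delta$ are the same for every $i$) and a partition of $Y_i$ into three sets $X_{i,1}, X_{i,2}, X_{i,3}$ such that every component of $G[Y_i][X_{i,j}] = G[X_{i,j}]$ has at most $N$ vertices, for $1\le j\le 3$.

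Finally, I would relabel the $3t$ sets $\{X_{i,j} : 1\le i\le t,\ 1\le j\le 3\}$ as $X_1, X_2, \ldots, X_{3t}$. Since $\{Y_1,\ldots,Y_t\}$ partitions $V(G)$ and each $\{X_{i,1}, X_{i,2}, X_{i,3}\}$ partitions $Y_i$, this is indeed a partition of $V(G)$, and by construction each $G[X_k]$ has no component with more than $N$ vertices. The only quantitative step is verifying that one may apply Theorem~\ref{3color minor free} with a \emph{uniform} $\Delta=s(t)$ across all $i$, which is immediate because $s$ depends only on $t$.
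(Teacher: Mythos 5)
Your proof is correct and follows essentially the same two-step approach as the paper: apply Theorem~\ref{partitioning into bounded max deg} to partition $V(G)$ into $t$ parts of bounded maximum degree, then apply Theorem~\ref{3color minor free} with $H=K_{t+1}$ and $\Delta=s$ to each part to split it into three. The extra remarks you add (that each $G[Y_i]$ is itself $K_{t+1}$-minor-free as a subgraph of $G$, and that $N$ can be taken uniform since $s$ depends only on $t$) are small details left implicit in the paper.
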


\begin{proof}
By Theorem \ref{partitioning into bounded max deg}, there exists an integer $s$ such that $V(G)$ can be partitioned into $t$ sets $V_1,V_2,\ldots,V_t$ such that the maximum degree of~$G[V_i]$ is at most $s$ for $1\le i\le t$.
By Theorem~\ref{3color minor free}, there exists an integer $N$ depending only on $t$ such that for  $1\le i\le t$, $V_i$ can be partitioned into three sets $V_{i1}, V_{i2}, V_{i3}$ and each of~$G[V_{i1}]$, $G[V_{i2}]$, $G[V_{i3}]$ has no component having size larger than $N$ vertices.
\end{proof}

\bigskip

In this paper, \emph{graphs} are simple.
A \emph{$k$-coloring} of a graph $G$ is a function mapping the vertices of~$G$ into the set $\{1,2,\ldots,k\}$.
A \emph{monochromatic component} is a component of the subgraph induced by the vertices of the same color in a given $k$-coloring.
The \emph{size} of a component is the number of its vertices.
For a graph $G$ and a set $X$ of vertices,
let $N_G(X)$ be the set of vertices not in $X$ but adjacent to some vertex in $X$
and $N_G[X] = N_G(X) \cup X$. For a vertex $v$ of a graph $G$, let $N_G(v)=N_G(\{v\})$.

The proof of Theorem~\ref{3color odd minor free} uses the machinery in the Graph Minors series of Robertson and Seymour
and the structure theorem of graphs with no odd minors by Geelen, Gerards, 
Reed, Seymour, and Vetta~\cite{GGRSV2009}.
A theorem by Robertson and Seymour~\cite{rsXVI} states that every graph that excludes a fixed graph as a minor can be ``decomposed'' into pieces satisfying certain structure properties.
We will review some tools in the Graph Minors series and modify the aforementioned pieces such that they are relatively easier to be $3$-colored with small monochromatic components in Section~\ref{sec:minors}.
In Section~\ref{sec:monochromatic components}, we complete the proof of Theorem~\ref{3color odd minor free} by first $3$-coloring the aforementioned pieces and then extending the coloring to the whole graph.
Finally, we will make some remarks in Section~\ref{sec:remarks}.

\section{Structure theorems}
\label{sec:minors}
In this section, we review some notions in the Graph Minors series of Robertson and Seymour and derive a structure for graphs without a fixed graph as a minor.

A \emph{tree-decomposition} of a graph $G$ is a pair $(T,\X)$ such that $T$ is a tree and $\X = \{X_t: t \in V(T)\}$ is a collection of subsets of~$V(G)$ with the following properties.
\begin{itemize}
	\item $\bigcup_{t \in V(T)} X_t = V(G)$.
	\item For every $e \in E(G)$, there exists $t \in V(T)$ such that $X_t$ contains both ends of~$e$.
	\item For every $v \in V(G)$, the subgraph of~$T$ induced by $\{t \in V(T): v \in X_t\}$ is connected.
\end{itemize}
For every $t \in V(T)$, $X_t$ is called the \emph{bag} of~$t$.
The \emph{width} of~$(T,\X)$ is $\max\{\abs{ X_t } : t \in V(T)\}-1$.
The \emph{adhesion} of~$(T,\X)$ is $\max\{\abs{X_t \cap X_{t'}}: tt' \in E(T)\}$.
A \td{} $(T,\X)$ is a \emph{\pd} if $T$ is a path.
The \emph{tree-width} of~$G$ is the minimum width of a \td{} of~$G$.

A \emph{separation} of a graph $G$ is an ordered pair $(A,B)$ of subgraphs with $A \cup B=G$ and $E(A \cap B) = \emptyset$, and the \emph{order} of a separation $(A,B)$ is $\abs{ V(A) \cap V(B) }$.
A~\emph{tangle} $\T$ in $G$ of \emph{order} $\theta$ is a set of separations of~$G$, each of order less than $\theta$ such that
\begin{enumerate}
	\item[(T1)] for every separation $(A,B)$ of~$G$ of order less than $\theta$, either $(A,B) \in \T$ or $(B,A) \in \T$;
	\item[(T2)] if $(A_1, B_1), (A_2,B_2), (A_3,B_3) \in \T$, then $A_1 \cup A_2 \cup A_3 \neq G$;
	\item[(T3)] if $(A,B) \in \T$, then $V(A) \neq V(G)$.
\end{enumerate}
Tangles were first introduced by Roberson and Seymour~\cite{rsX}.
We call (T1), (T2) and (T3) the first, second and third tangle axiom, respectively.
For a subset $Z$ of $V(G)$ with $\abs{Z}<\theta$, we define $\T-Z$ to be the set of all separations $(A',B')$ of $G-Z$ of order less than $\theta-\abs{Z}$ such that there exists $(A,B) \in \T$ with $Z \subseteq V(A) \cap V(B)$, $A'=A-Z$ and $B'=B-Z$.
We remark that $\T-Z$ is a tangle in $G-Z$ of order $\theta-\abs{Z}$ by~\cite[Theorem 8.5]{rsX}.

Given a graph $H$, an \emph{$H$-minor} of a graph $G$ is a map $\alpha$ with domain $V(H) \cup E(H)$ such that the following hold.
\begin{itemize}
	\item For every $h \in V(H)$, $\alpha(h)$ is a nonempty connected subgraph of~$G$. 
	\item If $h_1$ and $h_2$ are different vertices of~$H$, then $\alpha(h_1)$ and $\alpha(h_2)$ are disjoint.
	\item For each edge $e$ of~$H$ with ends $h_1,h_2$, $\alpha(e)$ is an edge of~$G$ with one end in $\alpha(h_1)$ and one end in $\alpha(h_2)$; furthermore, if $h_1=h_2$, then $\alpha(e) \in E(G)-E(\alpha(h_1))$.
        \item If $e_1, e_2$ are two different edges of~$H$, then $\alpha(e_1) \neq \alpha(e_2)$.
\end{itemize}
We say that \emph{$G$ contains an $H$-minor} if such a function $\alpha$ exists.
A tangle~$\T$ in $G$ \emph{controls} an $H$-minor $\alpha$ if 
$\T$ has no $(A,B)$ of order less than $\abs{V(H)}$ such that $V(\alpha(h)) \subseteq V(A)$ for some $h\in V(H)$.

A \emph{society} is a pair $(S,\Omega)$, where $S$ is a graph and $\Omega$ is a cyclic permutation of a subset $\bar{\Omega}$ of~$V(S)$.
For a nonnegative integer $\rho$, a society $(S,\Omega)$ is a \emph{$\rho$-vortex} if for all distinct $u,v \in \bar{\Omega}$, there do not exist $\rho+1$ mutually disjoint paths of~$S$ between $I \cup \{u\}$ and $J \cup \{v\}$, where $I$ is the set of vertices in $\bar{\Omega}$ after $u$ and before $v$ in the order $\Omega$, and $J$ is the set of vertices in $\bar{\Omega}$ after $v$ and before $u$.
For a society $(S,\Omega)$ with $\bar{\Omega} = \{v_1,v_2,\ldots,v_{\abs{\bar{\Omega}}}\}$ in order, a \emph{vortical decomposition} of~$(S,\Omega)$ is a path-decomposition $(t_1t_2\cdots t_{\lvert \bar{\Omega} \rvert}, \X )$ such that the $i$-th bag of~$\X$ contains the $i$-th vertex $v_i$ for each $i$.

\begin{theorem}[{Robertson and Seymour~\cite[(8.1)]{rsIX}}] \label{path decomp of vortex}
  Every $\rho$-vortex  has a vortical decomposition with adhesion at most $\rho$.
\end{theorem}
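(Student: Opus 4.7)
The plan is to produce a nested chain of separations $(A_1,B_1),\ldots,(A_{n-1},B_{n-1})$ of $S$ (where $n := \abs{\bar{\Omega}}$), each of order at most $\rho$, with the properties that $L_i := \{v_1,\ldots,v_i\} \subseteq V(A_i)$, $R_i := \{v_{i+1},\ldots,v_n\} \subseteq V(B_i)$, and $A_i \subseteq A_{i+1}$ and $B_i \supseteq B_{i+1}$ as subgraphs. The bags of the desired vortical decomposition will then be read off as $X_{t_i} := V(A_i) \cap V(B_{i-1})$ for $1 \le i \le n$, under the conventions $A_n = B_0 = S$ and $A_0$, $B_n$ empty.

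First I would apply the $\rho$-vortex condition with $u = v_1$ and $v = v_{i+1}$ for each $1 \le i \le n-1$: this gives $I \cup \{u\} = L_i$ and $J \cup \{v\} = R_i$, so $S$ admits at most $\rho$ pairwise vertex-disjoint paths between $L_i$ and $R_i$. The vertex form of Menger's theorem then supplies a separation $(A_i, B_i)$ of $S$ of order at most $\rho$ with $L_i \subseteq V(A_i)$ and $R_i \subseteq V(B_i)$.

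The hard part is coordinating the separations produced independently in the previous step into a single nested chain. I would choose each $(A_i, B_i)$ canonically: of minimum order subject to $L_i \subseteq V(A_i)$ and $R_i \subseteq V(B_i)$, and, among all such, with $A_i$ maximal as a subgraph of $S$. Submodularity implies that the set of minimum-order $L_i$-$R_i$-separations is closed under the uncrossing operation that replaces a pair $(A, B), (A', B')$ by $(A \cap A', B \cup B')$ and $(A \cup A', B \cap B')$, so such a canonical maximum exists. To verify nestedness, fix $i < j$ and form the uncrossed pair: $(A_i \cap A_j, B_i \cup B_j)$ separates $L_i$ from $R_i$ (using $L_i \subseteq L_j$) and $(A_i \cup A_j, B_i \cap B_j)$ separates $L_j$ from $R_j$ (using $R_j \subseteq R_i$). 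Submodularity together with the minimality of the two originals forces each uncrossed separation to itself attain minimum order; in particular $(A_i \cup A_j, B_i \cap B_j)$ is a minimum $L_j$-$R_j$-separation whose $A$-side contains $A_j$ as a subgraph, and the maximality defining $A_j$ then forces $A_i \cup A_j = A_j$, i.e.\ $A_i \subseteq A_j$. A symmetric argument yields $B_i \supseteq B_j$.

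Once the nested chain is in hand, the vortical-decomposition axioms for $X_{t_i} := V(A_i) \cap V(B_{i-1})$ come out routinely: $v_i \in L_i \cap R_{i-1} \subseteq X_{t_i}$; every vertex $v \in V(S)$ lies in $X_{t_i}$ for the smallest $i$ with $v \in V(A_i)$; every edge $e$ lies in $X_{t_i}$ for the smallest $i$ with $e \in E(A_i)$ (since then $e \in E(B_{i-1})$ by the separation axiom); the subpath condition follows from $A_i \subseteq A_{i+1}$ and $B_{i-1} \supseteq B_i$; and $\abs{X_{t_i} \cap X_{t_{i+1}}} \le \abs{V(A_i) \cap V(B_i)} \le \rho$ is the adhesion bound. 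The main obstacle lies in the third paragraph: the vortex condition supplies small separations only one at a time, and marshalling them into a single nested chain without inflating any order beyond $\rho$ is exactly what the uncrossing-plus-maximality argument achieves; the remaining steps are elementary bookkeeping.
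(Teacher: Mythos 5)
The paper gives no proof of this theorem; it is cited verbatim from Robertson and Seymour, \emph{Graph Minors IX}, statement~(8.1). Your reconstruction is correct and is, as far as I can tell, essentially the argument Robertson and Seymour use: apply the vortex condition with $u=v_1$ and $v=v_{i+1}$ so that $I\cup\{u\}=L_i$ and $J\cup\{v\}=R_i$, invoke Menger to obtain a separation of order at most $\rho$ for each $i$, and then use submodularity of separation order to uncross the family into a nested chain, from which the bags $X_{t_i}=V(A_i)\cap V(B_{i-1})$ are read off. All the decomposition axioms then check out exactly as you describe, and $X_{t_i}\cap X_{t_{i+1}}\subseteq V(A_i)\cap V(B_i)$ gives the adhesion bound.

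The only place where a reader might want more detail is the appeal to ``a symmetric argument'' for $B_i\supseteq B_j$. Choosing $A_i$ to be the unique maximal $A$-side among minimum-order $L_i$-$R_i$ separations does not, by itself, canonicalize $B_i$, since for a fixed $A_i$ there can be several choices of $B_i$ making $(A_i,B_i)$ a separation; and you do genuinely need the $B$-chain to be nested for the ``every vertex lies in a contiguous set of bags'' axiom to hold (it is not a consequence of the $A$-chain alone). The standard fix is to simultaneously take $B_i$ minimal given $A_i$ (equivalently, to work in the set-based separation lattice, where maximizing the $A$-side and minimizing the $B$-side are the same corner element), after which the same uncrossing computation you wrote for the $A$-sides yields $B_i\cap B_j=B_j$ and hence $B_j\subseteq B_i$. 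This is routine, but worth stating explicitly since the contiguity axiom silently depends on it.
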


A \emph{segregation} of a graph $G$ is a set $\Se$ of societies such that 
\begin{itemize}
	\item $S$ is a subgraph of~$G$ for every $(S, \Omega) \in \Se$, and $\bigcup \{S: (S,\Omega) \in \Se\}=G$,
	\item for every distinct $(S,\Omega)$ and $(S', \Omega') \in \Se$, $V(S \cap S') \subseteq \bar{\Omega} \cap \overline{\Omega'}$ and $E(S \cap S') = \emptyset$.
\end{itemize}
We write $V(\Se) = \bigcup \{\bar{\Omega}: (S, \Omega) \in \Se\}$.
For a tangle $\T$ in $G$, a segregation $\Se$ of~$G$ is \emph{$\T$-central} if for every $(S,\Omega) \in \Se$, there is no $(A,B) \in \T$ with $B \subseteq S$.

A \emph{surface} is a nonnull compact connected $2$-manifold without boundary.
Let $\Sigma$ be a surface.
For every subset $\Delta$ of~$\Sigma$, we denote the closure of~$\Delta$ by $\bar{\Delta}$ and the boundary of~$\Delta$ by $\partial\Delta$.
An \emph{arrangement} of a segregation $\Se = \{(S_1, \Omega_1), \ldots, (S_k, \Omega_k)\}$ in $\Sigma$ is a function $\alpha$ with domain $\Se \cup V(\Se)$, such that the following hold.
\begin{itemize}
	\item For $1 \leq i \leq k$, $\alpha(S_i, \Omega_i)$ is a closed disk $\Delta_i \subseteq \Sigma$, and $\alpha(x) \in \partial\Delta_i$ for each $x \in \overline{\Omega_i}$.
	\item For $1 \leq i \leq k$, if $x \in \Delta_i \cap \Delta_j$, then $x=\alpha(v)$ for some $v \in \overline{\Omega_i} \cap \overline{\Omega_j}$.
	\item For all distinct $x,y \in V(\Se)$, $\alpha(x) \neq \alpha(y)$.
	\item For $1 \leq i \leq k$, $\Omega_i$ is mapped by $\alpha$ to the natural order of~$\alpha(\overline{\Omega_i})$ determined by $\partial\Delta_i$.
\end{itemize}
An arrangement is \emph{proper} if $\Delta_i \cap \Delta_j = \emptyset$ whenever 
$\abs{ \overline{\Omega_i} }, \abs{ \overline{\Omega_j}} >3$,
for all $1 \leq i < j \leq k$.

An \emph{O-arc} is a subset homeomorphic to a circle, and a \emph{line} is a subset homeomorphic to $[0,1]$.
A \emph{drawing} $\Gamma$ in a surface $\Sigma$ is a pair $(U,V)$, where $V \subseteq U \subseteq \Sigma$, $U$ is closed, $V$ is finite, $U-V$ has only finitely many arc-wise connected components, called \emph{edges}, and for every edge $e$, either $\bar{e}$ is a line with set of ends $\bar{e} \cap V$, or $\bar{e}$ is an O-arc and $\lvert \bar{e} \cap V \rvert =1$.
The components of~$\Sigma-U$ are called \emph{regions}.
The members of~$V$ are called \emph{vertices}.
If $v$ is a vertex of a drawing $\Gamma$ and $e$ is an edge or a region of~$\Gamma$, we say that $e$ is \emph{incident with} $v$ if $v$ is contained in the closure of~$e$.
Note that the incidence relation between vertices and edges of $\Gamma$ defines a multigraph, and we say that $\Gamma$ is a \emph{drawing} of a multigraph $G$ in $\Sigma$ if $G$ is defined by this incident relation.
In this case, we say that $G$ is \emph{embeddable} in $\Sigma$, or $G$ can be \emph{drawn} in $\Sigma$.
A drawing is \emph{$2$-cell} if $\Sigma$ is connected and every region is an open disk.

A drawing $\Gamma=(U,V)$ in $\Sigma$ is the \emph{skeleton} of a proper arrangement $\alpha$ of a segregation $\Se$ in $\Sigma$ if $V=\bigcup_{v \in V(\Se)} \alpha(v)$ and  $U$ consists of the boundary of~$\alpha(S,\Omega)$ for each $(S,\Omega) \in \Se$ with $\lvert \bar{\Omega} \rvert = 3$, and a line in $\alpha(S',\Omega')$ with ends $\overline{\Omega'}$ for each $(S',\Omega') \in \Se$ with $\lvert \overline{\Omega'} \rvert=2$.
Note that we do not add any edges into the skeleton for $(S,\Omega)$ with $\lvert \bar{\Omega} \rvert \leq 1$ or $\lvert \bar{\Omega} \rvert >3$.

A segregation $\Se$ of~$G$ is \emph{maximal} if there exists no segregation $\Se'$ such that $\{(S,\Omega) \in \Se: \lvert \bar{\Omega} \rvert > 3\} = \{(S',\Omega') \in \Se': \lvert \overline{\Omega'} \rvert > 3\}$ and for every $(S,\Omega) \in \Se$ with $\lvert \bar{\Omega} \rvert \leq 3$, there exists $(S',\Omega') \in \Se'$ with $\lvert \overline{\Omega'} \rvert \leq 3$ such that $S' \subseteq S$, and the containment is strict for at least one society.
Note that if $\Se$ is maximal, then for every $(S,\Omega) \in \Se$ with $\abs{\bar\Omega}\le 3$ and every $v \in \bar{\Omega}$, there exist $\lvert \bar{\Omega} \rvert-1$ paths in $S$ from $v$ to $\bar{\Omega}-\{v\}$ intersecting only in $v$.
In particular, the maximum degree of the skeleton of a proper arrangement of a maximal segregation of~$G$ is at most the maximum degree of~$G$.

By taking advantage of a theorem by Robertson and Seymour~{\cite[Theorem~(9.2)]{rsXIV}}, the following statement is an easy corollary of a theorem in Dvo\v{r}\'{a}k~{\cite[Theorem 7]{d}} by choosing the function $\phi$ in {\cite[Theorem 7]{d}} to be the constant function $4d+5$.
(We omit the statements of {\cite[Theorem~(9.2)]{rsXIV}} and {\cite[Theorem 7]{d}} as they require a couple of definitions to be formally stated but will not be further used in the rest of the paper.)

\begin{corollary} \label{stronger excluding minor}
For every graph $L$, there exists an integer $\kappa$ such that for every positive integer $d$, there exist integers $\theta, \xi, \rho$ with the following property.
If a graph $G$ has a tangle $\T$ of order at least $\theta$ controlling no $L$-minor of~$G$, then there exist $Z \subseteq V(G)$ with $\abs{Z}\le \xi$, a maximal $(\T-Z)$-central segregation $\Se=\Se_1 \cup \Se_2$ of~$G-Z$ with $\abs{\Se_2}\le \kappa$ and a proper arrangement $\alpha$ of~$\Se$ in some surface $\Sigma$ in which $L$ cannot be drawn, such that $\abs{\bar{\Omega}}\le 3$ for all $(S,\Omega) \in \Se_1$,  every member in $\Se_2$ is a $\rho$-vortex, and the skeleton $G'$ of $\alpha$ of $\Se$ in $\Sigma$ satisfies the following.
	\begin{enumerate}
        \item $G'$ is $2$-cell embedded in $\Sigma$. 
		\item For every $(S,\Omega) \in \Se_2$, there exists a closed disk $D_S$ in $\Sigma$ containing $\alpha(S)$ and disjoint from $\bigcup_{(S',\Omega') \in \Se_2-\{(S,\Omega)\}} D_{S'}$ such that $D_S$ contains every vertex of~$G'$ that can be connected by a path in $G'$ of length at most $d$ from a vertex in $\bar{\Omega}$.
		\item For distinct $(S,\Omega),(S',\Omega') \in \Se_2$, there exists no path of length at most $2d+2$ in $G'$ from $\overline{\Omega}$ to $\overline{\Omega'}$.
	\end{enumerate}
\end{corollary}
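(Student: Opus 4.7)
The plan is to invoke Dvo\v{r}\'{a}k's Theorem~7 of~\cite{d} directly with the distance function $\phi$ chosen to be the constant $4d+5$, and then massage the output so that the small part $\Se_1$ of the segregation is maximal. Dvo\v{r}\'{a}k's theorem, applied to a graph with a tangle $\T$ of order at least some $\theta(L,d)$ controlling no $L$-minor, already produces a bounded apex set $Z$, a $(\T-Z)$-central segregation $\Se=\Se_1\cup\Se_2$ with $\abs{\Se_2}\le \kappa(L)$ in which the members of $\Se_1$ have $\abs{\bar\Omega}\le 3$ and those of $\Se_2$ are $\rho$-vortices, a proper arrangement of $\Se$ in a surface $\Sigma$ of bounded Euler genus in which $L$ cannot be drawn, and a family of pairwise disjoint closed disks $D_S\supseteq \alpha(S)$ for each vortex whose ``thickness'' in the skeleton $G'$ is controlled by the chosen function $\phi$. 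In particular, $\kappa$ depends only on $L$ and is quantified first, while $\theta,\xi,\rho$ depend on both $L$ and $d$.

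The choice $\phi\equiv 4d+5$ is calibrated so that items~(2) and~(3) drop out simultaneously. On the one hand, the disk $D_S$ will contain every vertex of $G'$ within distance $d$ in $G'$ from $\bar\Omega$, yielding~(2); on the other hand, the gap between disjoint disks $D_S$ and $D_{S'}$ translates, via the skeleton, into a strict lower bound greater than $2d+2$ on any path in $G'$ from $\bar\Omega$ to $\overline{\Omega'}$, yielding~(3). The fact that $G'$ can be taken to be $2$-cell embedded in $\Sigma$, that is item~(1), is standard: either Dvo\v{r}\'{a}k's theorem supplies it outright, or otherwise one discards the components of $\Sigma$ not meeting the tangle via Robertson--Seymour's Theorem~(9.2) of~\cite{rsXIV}, shrinking $\Sigma$ to the ``active'' component.

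It remains to arrange that $\Se$ is maximal. If the segregation produced above is not maximal, then some $(S,\Omega)\in\Se_1$ admits a refinement into strictly smaller societies with $\abs{\bar\Omega}\le 3$; I would carry out any such refinement and iterate. The procedure terminates because each step strictly decreases $\sum_{(S,\Omega)\in\Se_1}\abs{E(S)}$. Crucially, all refinements happen inside faces disjoint from $\bigcup_{(S,\Omega)\in\Se_2} D_S$, so the $\rho$-vortex structure of $\Se_2$ and the distance properties~(2) and~(3) are preserved, and $(\T-Z)$-centrality passes down to sub-societies in a routine way.

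The main obstacle is not conceptual but a bookkeeping one: verifying that the specific constant $4d+5$ is large enough to yield both~(2) and~(3) at once from a single application of Dvo\v{r}\'{a}k's theorem, and confirming that the maximality refinement neither breaks the properness of the arrangement in $\Sigma$ nor the $(\T-Z)$-centrality of individual societies. Both checks are straightforward unwindings of the definitions of ``proper arrangement'' and $(\T-Z)$-central segregation reviewed earlier in this section, once one observes that subdividing a face-society of size at most $3$ into finer face-societies of size at most $3$ is a purely local planar operation that does not interact with the vortex disks.
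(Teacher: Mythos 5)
Your proposal follows exactly the route the paper takes: apply Dvo\v{r}\'{a}k's Theorem~7 with the constant distance function $\phi\equiv 4d+5$, invoke Robertson--Seymour's Theorem~(9.2) of~\cite{rsXIV} to obtain the $2$-cell embedded skeleton, and then pass to a maximal $(\T-Z)$-central segregation by refining the size-$\le 3$ societies. The paper presents this as an ``easy corollary'' and leaves the details implicit; you have filled them in along the same lines.
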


Let $G_0$ be a drawing in a surface $\Sigma$ with $k$ pairwise disjoint closed disks $D_1$,
$D_2$, $\ldots$, $D_k$ such that each disk intersects $G_0$ only in vertices of~$G_0$ and contains no vertex of~$G_0$ in its interior.
For $1 \leq i \leq k$, let $v_{i,1},v_{i,2},\ldots,v_{i,n_i}$ be the vertices of~$G_0 \cap \partial D_i$ appearing on $\partial D_i$ in order.
For a positive integer $w$, a graph $G$ is an \emph{outgrowth by $k$
  $w$-rings of a graph $G_0$ in $\Sigma$}~\cite{rsXVI} if 
\begin{itemize}
\item there exist $k$ societies $(S_1,\Omega_1)$, $(S_2,\Omega_2)$, $\ldots$, $(S_k,\Omega_k)$  such that 
$G=G_0\cup \bigcup_{i=1}^k S_i$ and 
$\Omega_i=\{v_{i,1},v_{i,2},\ldots,v_{i,n_i}\}$ in order and $S_i \cap G_0 = \overline{\Omega_i}$ for $1\le i\le k$, 
\item for $1 \leq i \leq k$,  $S_i$ has a path-decomposition $(t_{i,1}t_{i,2}\cdots t_{i,n_i},\X_i)$  of width at most $w$ such that $v_{i,j} \in X_{i,j}$ for $1 \leq j \leq n_i$, where $X_{i,j}$ is the bag at $t_{i,j}$.
\end{itemize}
In addition, for $d\ge 0$, we say that $G$ is \emph{$d$-local} if $G_0$ satisfies the following.
	\begin{itemize}
		\item For $1 \leq i \leq k$, there exists a closed disk $D$ in $\Sigma$ containing $D_i$ and disjoint from $\bigcup_{j \neq i} D_j$ such that $D$ contains every vertex of~$G_0$ that can be connected by a path in $G_0$ of length at most $d$ from a vertex in $V(G_0) \cap \partial D_i$.
		\item For $1 \leq i < j \leq k$, $G_0$ has no path of length at most $2d+2$ from a vertex in $V(G_0) \cap \partial D_i$ to a vertex in $V(G_0) \cap \partial D_j$.
	\end{itemize}

Let $\Se$ be a segregation of a graph $G$.
Assume that for every $(S,\Omega) \in \Se$ with $\lvert \bar{\Omega} \rvert > 3$, there exists a path-decomposition $(P_S=t_1t_2\cdots t_{\lvert \bar{\Omega} \rvert},\X_S)$ such that the bag at $t_i$, denoted by $X_{S,i}$, contains the $i$-th vertex $v_{S,i}$ in $\bar{\Omega}$, for every $1 \leq i \leq \abs{ \bar{\Omega} }$.
For every $(S,\Omega) \in \Se$ with $\lvert \bar{\Omega} \rvert >3$, let $G_S$ be the graph obtained from the subgraph of~$S$ induced by $\bar{\Omega} \cup \bigcup_{i=1}^{\lvert \bar{\Omega} \rvert-1} (X_{S,i} \cap X_{S,i+1})$ by adding three new vertices $x_{S,i,1}$, $x_{S,i,2}$, $x_{S,i,3}$ for each $i\in \{1,2,\ldots,\abs{\bar{\Omega} }\}$ with the same set of neighbors 
\[
N_G(X_{S,i} - (\{v_{S,i}\} \cup X_{S,i-1} \cup X_{S,i+1})) \cap 
(\{v_{S,i}\} \cup X_{S,i-1}  \cup  X_{S,i+1}),
\]
where  $X_{S,0}=X_{S,\lvert \bar{\Omega} \rvert+1} = \emptyset$.

Let $G_0$ be the skeleton of a proper arrangement $\alpha$ of~$\Se$ in a surface $\Sigma$.
We define the \emph{extended skeleton} of~$\alpha$ of~$\Se$ in $\Sigma$ \emph{with respect to $\{(P_S,\X_S): (S,\Omega) \in \Se, \lvert \bar{\Omega} \rvert >3\}$} to be the graph obtained from the disjoint union of~$G_0$ and $G_S$ for every $(S,\Omega) \in \Se$ with $\lvert \bar{\Omega} \rvert >3$ by identifying the copies of the $i$-th vertex of~$\bar{\Omega}$ in $G_0$ and $G_S$ for each $(S,\Omega) \in \Se$ with $\lvert \bar{\Omega} \rvert >3$ and for every $1 \leq i \leq \lvert \bar{\Omega} \rvert$.
Note that if there are at most $\kappa$ members $(S,\Omega)$ of~$\Se$ with $\lvert \bar{\Omega} \rvert >3$ and the adhesion of each $(P_S,\X_S)$ is at most $\rho$, then the extended skeleton of~$\alpha$ of~$\Se$ is an outgrowth by $\kappa$ $(2\rho+3)$-rings of~$G_0$ in $\Sigma$.
Furthermore, if $\Se$ is a maximal segregation, then the maximum degree of the extended skeleton of~$\alpha$ of~$\Se$ in $\Sigma$ is at most $\max\{3\Delta, 2\rho+1\}$, where $\Delta$ is the maximum degree of~$G$.

\begin{theorem} \label{structure thm for excluding minor locally planar}
For every graph $L$ and positive integer $d$, there exist integers $\kappa, \theta, \xi, \rho$ with the following property.

If a graph $G$ has a tangle $\T$  of order at least $\theta$ controlling no $L$-minor of~$G$, then there exist $Z \subseteq V(G)$ with $\abs{Z}\le \xi$ and a maximal $(\T-Z)$-central segregation $\Se$ of~$G-Z$ 
with a proper arrangement $\alpha$ in a surface $\Sigma$ in which $L$ cannot be drawn,
such that if $\Se_1=\{(S,\Omega)\in \Se : \abs{\bar \Omega}\le 3\}$ and $\Se_2=\Se-\Se_1$, then 
	\begin{enumerate}
		\item $\lvert \Se_2 \rvert \leq \kappa$ and every $(S,\Omega)\in \Se_2$ is a $\rho$-vortex with a vortical decomposition $(P_S,\X_S)$ of adhesion at most $\rho$, 
		\item the extended skeleton of~$\alpha$ of~$\Se$ in $\Sigma$ with respect to $\{(P_S,\X_S): (S,\Omega) \in \Se_2\}$ is a $d$-local outgrowth by $\kappa$ $(2\rho+3)$-rings of the skeleton of~$\alpha$ of~$\Se$ in $\Sigma$, whose maximum degree is at most $\max\{3\Delta, 2\rho+1\}$, where $\Delta$ is the maximum degree of~$G$.
	\end{enumerate}
\end{theorem}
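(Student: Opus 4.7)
The plan is to directly combine Corollary~\ref{stronger excluding minor}, Theorem~\ref{path decomp of vortex}, and the construction of the extended skeleton already set up in the paper. First, apply Corollary~\ref{stronger excluding minor} with the given graph $L$ and constant $d$ to obtain integers $\kappa,\theta,\xi,\rho$. Given $G$ and a tangle $\T$ of order at least $\theta$ controlling no $L$-minor of $G$, the corollary supplies a set $Z\subseteq V(G)$ with $\abs{Z}\le \xi$, a maximal $(\T-Z)$-central segregation $\Se=\Se_1\cup\Se_2$ of $G-Z$ with $\abs{\Se_2}\le\kappa$ and $\abs{\bar\Omega}\le 3$ for all $(S,\Omega)\in\Se_1$, and a proper arrangement $\alpha$ of $\Se$ in a surface $\Sigma$ in which $L$ cannot be drawn, such that every $(S,\Omega)\in\Se_2$ is a $\rho$-vortex and the skeleton $G'$ of $\alpha$ satisfies the two locality properties (2) and (3) listed there.

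Next, for each $(S,\Omega)\in\Se_2$, invoke Theorem~\ref{path decomp of vortex} to fix a vortical decomposition $(P_S,\X_S)$ of adhesion at most $\rho$; together with $\abs{\Se_2}\le\kappa$ this establishes conclusion~(1). Form the extended skeleton of $\alpha$ of $\Se$ with respect to $\{(P_S,\X_S):(S,\Omega)\in\Se_2\}$ according to the definition in the paper. To exhibit it as an outgrowth by $\kappa$ $(2\rho+3)$-rings of $G'$, the main ingredient is a path-decomposition of each $G_S$ of width at most $2\rho+3$ whose bag at position $i$ contains $v_{S,i}$. Take the path-decomposition indexed by $P_S=t_1 t_2 \cdots t_{\abs{\bar\Omega}}$ whose bag at $t_i$ is
\[
\{v_{S,i}\}\cup (X_{S,i-1}\cap X_{S,i})\cup (X_{S,i}\cap X_{S,i+1})\cup \{x_{S,i,1},x_{S,i,2},x_{S,i,3}\}.
\]
Using the adhesion bound, each bag has size at most $1+2\rho+3=2\rho+4$, so the width is at most $2\rho+3$, as required.

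To finish, it remains to verify the $d$-local property and the maximum degree bound. The $d$-local property of an outgrowth is, by definition, a condition on the underlying graph $G_0$ rather than on the outgrowth itself; in our setting $G_0$ is the skeleton $G'$, and the two required locality conditions translate verbatim into conclusions (2) and (3) of Corollary~\ref{stronger excluding minor}, so they hold. The maximum degree bound $\max\{3\Delta,2\rho+1\}$ was already recorded in the paragraph immediately preceding the theorem as a consequence of the maximality of $\Se$ (which controls the degree contributed by vertices of the original skeleton, exploiting the fact that at each $v\in\bar\Omega$ with $\abs{\bar\Omega}\le 3$ the skeleton edges are realized by internally disjoint paths of $G$) together with the adhesion bound $\rho$ (which controls the degree of the newly introduced $x$-vertices). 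Since the argument is essentially bookkeeping over previously established results, the only step that requires genuine verification is the width-$(2\rho+3)$ path-decomposition of each $G_S$, which is a direct calculation from the adhesion bound, and so I do not anticipate any single step forming a real obstacle.
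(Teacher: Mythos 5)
Your proposal is correct and follows essentially the same route as the paper's own proof: invoke Corollary~\ref{stronger excluding minor} for $L$ and $d$, apply Theorem~\ref{path decomp of vortex} to obtain the vortical decompositions of adhesion at most $\rho$, and then read off that the extended skeleton is a $d$-local outgrowth by $\kappa$ $(2\rho+3)$-rings with the stated degree bound (the paper folds the width and degree bookkeeping into the definitional paragraph preceding the theorem, while you make the width-$(2\rho+3)$ path-decomposition of each $G_S$ explicit, which is a reasonable and correct elaboration).
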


\begin{proof}
Let $\kappa,\theta,\xi,\rho$ be the numbers, $\Se=\Se_1 \cup \Se_2$ the segregation of~$G$, $\Sigma$ the surface, $\alpha$ the arrangement of~$\Se$ in $\Sigma$ obtained by applying Corollary~\ref{stronger excluding minor}.
By Theorem~\ref{path decomp of vortex}, for every $(S,\Omega) \in \Se_2$, $S$ has a vortical decomposition $(P_S,\X_S)$ of adhesion at most $\rho$.
Therefore, the extended skeleton of~$\alpha$ of~$\Se$ in $\Sigma$ with respect to $\{(P_S,\X_S): (S,\Omega) \in \Se_2\}$ is a $d$-local outgrowth by $\kappa$ $(2\rho+3)$-rings of the skeleton of~$\alpha$ of~$\Se$ in $\Sigma$.
Since $\Se$ is maximal, the maximum degree of the extended skeleton of~$\alpha$ of~$\Se$ is at most $\max\{3\Delta, 2\rho+1\}$
\end{proof}

\section{Monochromatic components}
\label{sec:monochromatic components}

For an integer $q>0$, 
a \emph{$q$-necklace with chain $v_1v_2 \ldots v_n$} is a multigraph $G$ with $V(G) = \{v_1,v_2,\ldots,v_n\}$ such that 
\begin{itemize}
	\item $v_1v_2\cdots v_nv_1$ is a cycle $C$, 
	\item $G$ contains pairwise edge-disjoint complete subgraphs $M_1,M_2,\ldots,M_k$ each having at most $q$ vertices such that $E(G) - E(C)= \bigcup_{i=1}^k E(M_i)$, and
	\item there exist no integers $i,j,a,b,c,d$ with $i\neq j$ and $a<b<c<d$ such that $\{v_a,v_c\} \subseteq V(M_i)$ and $\{v_b,v_d\} \subseteq V(M_j)$.
\end{itemize}
Note that every $2$-connected outerplanar multigraph is a $2$-necklace.

\begin{lemma} \label{tw necklace}
Every $q$-necklace has tree-width at most $\max\{q-1,2\}$.
\end{lemma}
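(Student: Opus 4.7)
The plan is to induct on the number $k$ of cliques $M_1,\ldots,M_k$; without loss of generality each $M_j$ has at least two vertices, since smaller cliques have no edges and may be removed. For the base case $k=0$, the graph is the (multi-)cycle $v_1v_2\cdots v_nv_1$, which has tree-width at most $2$ via the standard fan \pd{} with bags $\{v_1,v_i,v_{i+1}\}$ for $i=2,\ldots,n-1$ (and trivially when $n\le 2$).

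For the inductive step, I would fix any one clique $M=M_1$ and list its vertices in the cyclic order of the chain as $v_{i_1},v_{i_2},\ldots,v_{i_m}$, where $m=\abs{V(M)}\le q$. These $m$ vertices split the cycle into $m$ arcs $A_1,\ldots,A_m$, where $A_p$ is the subpath of $C$ from $v_{i_p}$ to $v_{i_{p+1}}$ (with $v_{i_{m+1}}=v_{i_1}$). The crucial structural claim is that for every other clique $M_j$, all of $V(M_j)$ lies in the closure of some single arc $A_p$. Indeed, edge-disjointness from $M$ together with $\abs{V(M_j)}\ge 2$ forces some $v_c\in V(M_j)$ into the interior of some arc $A_p$; if another vertex of $M_j$ lay outside that arc, then four vertices from $V(M)\cup V(M_j)$ would interleave cyclically, which after reading off their indices in the chain yields $a<b<c'<d'$ with $\{v_a,v_{c'}\}\subseteq V(M)$ and $\{v_b,v_{d'}\}\subseteq V(M_j)$, violating non-crossing (the equivalence of linear and cyclic non-crossing handles the wrap-around arc through the cut $v_nv_1$).

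For each arc $A_p$ I would then build a sub-necklace $G_p$ whose chain is the subpath of $C$ along $A_p$, whose cycle is completed by the chord $v_{i_p}v_{i_{p+1}}$ (already an edge of $M$), and whose cliques are precisely those $M_j$ with $V(M_j)\subseteq V(A_p)$. The non-crossing axiom for $G_p$ is inherited from $G$ because the cyclic order of its vertices is the restriction of the original one. Since $G_p$ has strictly fewer cliques than $G$, the induction hypothesis provides a tree-decomposition $(T_p,\X_p)$ of $G_p$ of width at most $\max\{q-1,2\}$.

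Finally I would assemble the pieces: take a root bag $B_0=V(M)$ of size $m\le q$, and for each arc $A_p$ with a nonempty interior, attach $T_p$ by adding an edge from $B_0$ to any bag of $T_p$ containing both $v_{i_p}$ and $v_{i_{p+1}}$ (such a bag exists because $v_{i_p}v_{i_{p+1}}$ is an edge of $G_p$). The resulting decomposition has width at most $\max\{m-1,\max\{q-1,2\}\}=\max\{q-1,2\}$. The trickiest step is verifying the subtree property at each $v_{i_p}\in V(M)$: this vertex lies in $B_0$ and in bags of the (at most two) sub-decompositions $T_{p-1}$ and $T_p$ whose arcs meet at $v_{i_p}$, and connectivity through $B_0$ is guaranteed because each attachment bag we chose already contains $v_{i_p}$. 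Edges of $M$ are covered by $B_0$, all other edges are covered inside their respective $T_p$, and length-one arcs need no sub-decomposition since their unique cycle edge is already in $B_0$.
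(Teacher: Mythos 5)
Your proof is correct and follows essentially the same approach as the paper: both fix a single clique $M_1$, use its vertices to split the cycle into arcs, observe by the non-crossing condition that each remaining clique fits into one arc (so each arc induces a smaller $q$-necklace), recurse, and glue the resulting tree-decompositions to a central bag on $V(M_1)$. The only cosmetic differences are that you start the induction at $k=0$ rather than $k=1$ and you spell out the wrap-around argument and the case $\abs{V(M_1)}<q$ more explicitly than the paper does.
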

\begin{proof}
  Let $G$ be a $q$-necklace with $n$ vertices $v_1,v_2,\ldots,v_n$ and $k$ complete subgraphs $M_1,M_2,\ldots,M_k$ each having at most $q$ vertices.
  Since outerplanar multigraphs have tree-width at most $2$, we may assume that $q\ge 3$ and $k\ge 1$.

  We claim that there is a tree-decomposition $(T,\X)$ of width at most $q-1$. We proceed by induction on $k$. 
  If $k=1$, then it is trivial to find such a tree-decomposition $(T,\X)$, as the graph $G$ is isomorphic to a graph obtained from $M_1$ by adding many paths. In $(T,\X)$, one bag is $M_1$ and other bags have at most three vertices.

  Now suppose that $k>1$. We may assume that $V(M_1)=\{v_{i_1},v_{i_2},\ldots,v_{i_q}\}$ with $1\le i_1<i_2<\cdots<i_q\le n$. We may assume that $i_1=1$ by rotating labels.
Let $i_{q+1}=n+1$ and $v_{n+1}=v_1$.
  For $j\in \{1,2,\ldots,q\}$, let $W_j=\{v_{i_j},v_{i_j+1},\ldots,v_{i_{j+1}}\}$.
  It is easy to see that $G[W_j]$ is a $q$-necklace and so it has a tree-decomposition $(T_j,\X_i)$ of width at most $q-1$. Since $v_{i_j}$ is adjacent to $v_{i_{j+1}}$,
  $T_j$ has a node $t_j$ whose bag contains $v_{i_j}$ and $v_{i_{j+1}}$.
  
  Let $T$ be the tree obtained from the disjoint union of all $T_j$ by adding a node $t$ adjacent to all $t_j$.  Let $M_1$ be the bag corresponding to $t$ and we assign bags to all other nodes of~$T$ according to $\X_j$ for some $j$. It is easy to see that this is a tree-decomposition of width at most $q-1$.
\end{proof}

We use the same idea of the proof of \cite[Lemma 8.1]{dt} to prove the following generalization.

\begin{lemma} \label{tw extended necklace}
For an integer $q\ge 3$, let $H$ be a $q$-necklace with chain $u_1u_2\cdots u_n$.
Let $(S,\Omega)$ be a society 
with a vortical decomposition $(t_1t_2\cdots t_n, \X)$ of width $w$.
If $G$ is the multigraph obtained from the disjoint union of~$S$ and $H$ by identifying $u_i$ with the $i$-th vertex of $\Omega$ for each $1 \leq i \leq n$, then $G$ has tree-width at most $q(w+1)-1$.
\end{lemma}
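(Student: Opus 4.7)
The plan is to construct a tree-decomposition of $G$ by ``inflating'' the tree-decomposition of $H$ provided by Lemma~\ref{tw necklace}. Concretely, I would first apply Lemma~\ref{tw necklace} to $H$ to obtain a tree-decomposition $(T,\{B_t\}_{t\in V(T)})$ of $H$ of width at most $q-1$, so that each $B_t$ has at most $q$ vertices. Then, for each $t\in V(T)$, I would define the inflated bag $B'_t=\bigcup_{u_i\in B_t}X_i$, where $X_1,\ldots,X_n$ are the bags of the vortical decomposition of $(S,\Omega)$. Since each $X_i$ has at most $w+1$ vertices and $\abs{B_t}\le q$, this gives $\abs{B'_t}\le q(w+1)$, matching the target width of $q(w+1)-1$.

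Next I would argue that $(T,\{B'_t\}_{t\in V(T)})$ is indeed a tree-decomposition of $G$. Vertex coverage is immediate: every vertex of $G$ equals some vertex of $S$, because each $u_i$ is identified with a vertex of $\Omega\subseteq V(S)$; such a vertex lies in some $X_i$, and $u_i$ lies in some $B_t$, so it lies in $B'_t$. Edge coverage splits into two cases: every edge of $S$ has both endpoints in a common $X_i$ (by the path-decomposition property), which sits inside any $B'_t$ with $u_i\in B_t$; and every edge of $H$ has both endpoints in a common $B_t$, so the two corresponding $X_i$'s (hence the two endpoints) both lie inside $B'_t$.

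The only non-routine step, and the main obstacle, is verifying the subtree condition. Fix $v\in V(G)$ and let $[a,b]$ be the set of indices $i$ with $v\in X_i$, which is an interval by the path-decomposition property. For each $i$, the set $T_i=\{t\in V(T):u_i\in B_t\}$ is a subtree of $T$ by the tree-decomposition property of $H$, and the set of nodes whose inflated bag contains $v$ is exactly $\bigcup_{i=a}^{b}T_i$. To upgrade this union to a single subtree, I would exploit the cycle $u_1u_2\cdots u_nu_1$ of the necklace: for each $a\le i<b$ the edge $u_iu_{i+1}$ of $H$ forces some bag $B_t$ to contain both $u_i$ and $u_{i+1}$, so $T_i\cap T_{i+1}\ne\emptyset$; chaining these intersections shows that $\bigcup_{i=a}^{b}T_i$ is a connected subgraph of the tree $T$, and therefore a subtree. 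This is the one step requiring care, and it is precisely where the hypothesis that the necklace's chain closes up into a cycle is genuinely used; everything else follows directly from the tree-decomposition and path-decomposition axioms.
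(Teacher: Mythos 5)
Your proposal is correct and follows essentially the same route as the paper: inflate each bag of the necklace's tree-decomposition (from Lemma~\ref{tw necklace}) by replacing each $u_i$ with the corresponding vortex bag $X_i$, bound the size by $q(w+1)$, and use the consecutive edges of the chain to chain together the subtrees $T_i$ over the interval of indices where a given vertex appears. The paper states this last step tersely (``Since there exists a path in $H$ passing through $u_1,\dots,u_n$ in order''); you simply spell out the interval-plus-chaining argument explicitly, which is exactly what that sentence encodes.
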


\begin{proof}
Let $X_i$ be the $i$-th bag of $\X$.
By Lemma~\ref{tw necklace}, $H$ has a \td{} $(T,\X')$ of width at most $q-1$.
We denote $\X'$ by $\{X_t': t\in V(T)\}$.
For every $t \in V(T)$, define $X''_t = X'_t \cup \bigcup\{X_i: u_i \in X'_t,1 \leq i \leq n\}$ and $\X''=\{X''_t:t\in V(T)\}$.
Since there exists a path in $H$ passing through $u_1$, $u_2$, $\ldots$, $u_n$ in order, $(T,\X'')$ is a tree-decomposition of~$G$ and \[\lvert X''_t \rvert \leq (w+1)\lvert X'_t \rvert \leq q(w+1).\]
So the width of~$(T,\X'')$ is at most $q(w+1)-1$.
\end{proof}

For a positive integer $k$ and a graph $G$, 
we say that a $k$-coloring $c$ of a subgraph $H$ of $G$ can be \emph{extended} to a $k$-coloring of~$G$ or can be \emph{extended} to $G$ if $G$ has a $k$-coloring $c'$ such that $c'(v) = c(v)$ for every $v \in V(H)$.

\begin{lemma} \label{mono comp local}
Let $\Delta,k,w$ and $g$ be positive integers.
Let $\Sigma$ be a surface of Euler genus $g$, and let $d=(5\Delta)^{2^g-1}(15\Delta)^{(32\Delta+8)2^g}$.
If $G$ is a $d$-local outgrowth by $k$ $w$-rings of a graph $G_0$ in $\Sigma$ and $G$ has maximum degree $\Delta$, then $G$ can be $3$-colored in such a way that every monochromatic component has at most 
$48 d^4 w\Delta^5$ vertices.
\end{lemma}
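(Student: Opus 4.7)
The plan is a two-stage coloring. First, I apply Theorem~\ref{3color surface} directly to $G_0$, which is embedded in $\Sigma$ (of Euler genus $g$) and, as a subgraph of $G$, has maximum degree at most $\Delta$. This yields a $3$-coloring $c_0$ of $V(G_0)$ in which every monochromatic component of $G_0$ has at most $d=(5\Delta)^{2^g-1}(15\Delta)^{(32\Delta+8)2^g}$ vertices. Crucially, this $d$ is exactly the constant appearing in the lemma's $d$-locality hypothesis, so the $d$-local condition is tailor-made to complement the Esperet--Joret bound on each vortex's neighborhood in $G_0$.

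In the second stage, for each vortex $(S_i,\Omega_i)$ I extend $c_0$ to $V(S_i)\setminus\bar{\Omega}_i$. The vortex $S_i$ has a path-decomposition of width $w$, hence tree-width at most $w$, and maximum degree at most $\Delta$. By Lemma~\ref{tw extended necklace} applied with $H$ the cyclic boundary $v_{i,1}v_{i,2}\cdots v_{i,n_i}$ (which is a $2$-necklace), the graph $S_i$ together with this boundary cycle has tree-width at most $2w+1$. A precoloring-compatible version of Theorem~\ref{3color bdd tw}---established by dynamic programming along the vortex's path-decomposition, tracking which monochromatic classes meet each adhesion set of size at most $w+1$---then extends $c_0|_{\bar{\Omega}_i}$ to a coloring of all of $V(S_i)$ in which every monochromatic component contained in $S_i$ has size $O(w\Delta)$. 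The main obstacle of the proof is precisely this precoloring-compatible extension: Theorem~\ref{3color bdd tw} alone produces only an unconstrained $2$-coloring, whereas here we must honor three arbitrarily assigned colors on the attachments while still guaranteeing small monochromatic components inside the vortex.

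Finally, I bound the size of a monochromatic component $C$ of $G$. A key consequence of the $d$-local condition is that any monochromatic component of $G_0$ has at most $d$ vertices and hence diameter less than $d$, so it can touch attachment vertices of at most one vortex (attachment sets of distinct vortices are at distance greater than $2d+2$ in $G_0$). Hence $C$ meets the interior of at most one vortex $S_i$, and it decomposes as a union of color-$c$ $G_0$-components (each of size $\le d$) together with color-$c$ $S_i$-components (each of size $O(w\Delta)$) that interact along attachment vertices. Since each $G_0$-component contributes at most $d$ attachments and each $S_i$-component contributes at most $O(w\Delta)$ attachments, and since the cyclic order of attachments along $\partial D_i$ combined with the degree bound $\Delta$ restricts the bipartite ``component interaction graph'' between the two families, the multiplicative interaction is controlled and a careful counting of the branching yields $|C| \le 48 d^4 w \Delta^5$ as claimed.
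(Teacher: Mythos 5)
Your high-level two-stage plan (color the planar part via Esperet--Joret, then handle each vortex separately using bounded tree-width) matches the paper's proof, but the core mechanism is different, and yours has a genuine gap at exactly the step you flag as ``the main obstacle.'' You invoke a ``precoloring-compatible version of Theorem~\ref{3color bdd tw}'' and assert it can be ``established by dynamic programming along the vortex's path-decomposition.'' That is not a known result, and it is not at all clear it holds with the required quantitative bound: forcing three arbitrary colors on $\bar\Omega_i$ while keeping monochromatic components inside $S_i$ of size $O(w\Delta)$ is a much stronger requirement than Theorem~\ref{3color bdd tw} itself, and you give no argument for it. The paper avoids this issue entirely --- it does \emph{not} extend the coloring of the exterior into the vortex. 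Instead it $2$-colors an auxiliary multigraph $H_i$ on $V(S_i)$ \emph{independently} of the exterior coloring, and then simply juxtaposes the two colorings.

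Your final counting step is also unsupported. After the coloring, a monochromatic component of $G$ can alternate many times between a $G_0$-piece and an $S_i$-piece, and you need to bound the number of such alternations to get a finite bound on $|C|$. You wave at ``the cyclic order of attachments along $\partial D_i$'' and ``the bipartite component interaction graph,'' but with only a cycle on $\bar\Omega_i$ (a $2$-necklace, as you propose), there is no mechanism preventing a single monochromatic component from absorbing arbitrarily many $G_0$-pieces and $S_i$-pieces along a long vortex. The paper's key move is precisely designed to control this: it adds, \emph{inside} $H_i$, a clique on $N_G(V(C))\cap V(S_i)$ for each monochromatic component $C$ of the exterior adjacent to the vortex; by $d$-locality and the bound $|V(C)|\le d$ these cliques have at most $d\Delta$ vertices and nest so that $H_i[\bar\Omega_i]$ is a $d\Delta$-necklace, whence $\mathrm{tw}(H_i)\le d\Delta(w+1)-1$ via Lemma~\ref{tw extended necklace}. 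A monochromatic component of $G_i$ restricted to $V(S_i)$ is then automatically contained in a single monochromatic component of $H_i$ (the added clique edges shortcut any detour through the exterior), so Theorem~\ref{3color bdd tw} applied to $H_i$ directly bounds the vortex-side portion, and attaching at most $d\Delta^2$ exterior pieces of size $\le d$ per vortex vertex finishes the count. Without this enriched necklace, your estimate $|C|\le 48 d^4 w\Delta^5$ does not follow from the pieces you have assembled.
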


\begin{proof}
We may assume that $\Delta\ge 3$.
Let $D_1,D_2,\ldots,D_k$ be the closed disks and $(S_1,\Omega_1),\ldots,(S_k,\Omega_k)$ the societies mentioned in the definition of an outgrowth $G$ by $k$ $w$-rings of~$G_0$ in $\Sigma$.

By Theorem~\ref{3color surface}, there exists a $3$-coloring $c$ of $G-\bigcup_{i=1}^k V(S_i)$ such that every monochromatic component has at most $d$ vertices.
Let $L_i$ be the set of vertices of $G-\bigcup_{k=1}^k V(S_i)$ that has a monochromatic path to a vertex in $N_G(V(S_i))$ with respect to $c$. Since $G$ is $d$-local, $L_i\cap L_j=\emptyset$ for $1\le i<j\le k$. Let $G_i=G[L_i\cup V(S_i)]$.
To prove this lemma, it suffices to show that for every $1 \leq i \leq k$, the $3$-coloring on $G_i-V(S_i)$ can be extended to a $3$-coloring of $G_i$ such that every monochromatic component of $G_i$ has at most $48 d^4 w\Delta^5$ vertices.

For $1 \leq i \leq k$, define $H_i$ to be the multigraph obtained from $G[V(S_i)]$ by first adding a cycle passing through $\overline{\Omega_i}$ in order and
adding a complete graph on $N_G(V(C))\cap V(S_i)$
for each monochromatic component $C$ of~$G_i-V(S_i)$.
Since each $C$ contains at most $d$ vertices, each of added complete subgraphs has at most $d\Delta$ vertices.
Since $G$ is $d$-local, $H_i[\overline{\Omega_i}]$ is a $d\Delta$-necklace.
Hence, the tree-width of~$H_i$ is at most $d\Delta(w+1)-1 $ by Lemma~\ref{tw extended necklace}, and the maximum degree of~$H_i$ is at most $(d\Delta-1)\Delta+2 \leq d\Delta^2$, as $\Delta\ge 3$.
By Theorem~\ref{3color bdd tw}, there exists a $3$-coloring of $H_i$ (in fact, a  $2$-coloring) such that every monochromatic component of $H_i$ contains at most $24 \cdot d\Delta(w+1) \cdot  d\Delta^2 $ vertices.

Now, we extend the $3$-coloring $c$ of $G_i-V(S_i)$ to 
the $3$-coloring $c'$ of $G_i$
by taking the $3$-coloring of $H_i$  on $V(S_i)$.
Let $Q$ be a monochromatic component of~$G_i$ with respect to $c'$.
We know that $\abs{V(Q)\cap V(S_i)}\le 24 \cdot d\Delta(w+1) \cdot  d\Delta^2$. Since $H_i$ has maximum degree at most $d\Delta^2$,
each vertex of $H_i$ may join at most $d\Delta^2$ distinct monochromatic components of $H_i-V(S_i)$, each having at most $d$ vertices.
Thus, $Q$ contains at most 
\[
(24 \cdot d\Delta(w+1) \cdot  d\Delta^2)\cdot d\Delta^2 \cdot d
\le 48 d^4 w\Delta^5
\]
vertices.
This completes the proof.
\end{proof}

The following simple lemma is a stronger statement of~\cite[Observation 3.9]{ej}.
This lemma is obvious, so we omit the proof.

\begin{lemma} \label{recolor vertex}
Let $G$ be a graph of maximum degree $\Delta$ and $Z$ a subset of~$V(G)$.
Assume that $G$ has a coloring such that every monochromatic component has size at most an integer $k$.
If we recolor some vertices in $Z$, then the union of the monochromatic components intersecting in $Z$ in the new coloring has at most $\lvert Z \rvert(\Delta k+1)$ vertices, and every monochromatic component disjoint from $Z$ in the new coloring has at most $k$ vertices.
\end{lemma}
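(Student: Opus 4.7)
The plan is to prove the two assertions separately by straightforward counting, treating each monochromatic component that meets $Z$ independently.

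For the second assertion, I would consider a monochromatic component $C$ of the new coloring with $C\cap Z=\emptyset$. Since no vertex of $C$ was recolored, every $v\in C$ has the same color in the new coloring as in the old one, so $C$ is a connected monochromatic set under the old coloring as well. Hence $C$ lies inside a single monochromatic component of the old coloring, and so $|C|\le k$.

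For the first assertion, I would fix a monochromatic component $C$ of the new coloring with $C\cap Z\neq\emptyset$ and split $V(C)=(C\cap Z)\cup(C\setminus Z)$. Each connected component $P$ of the induced subgraph $G[C\setminus Z]$ is monochromatic in the new coloring, and because vertices outside $Z$ retained their original colors, $P$ is also monochromatic in the old coloring; thus $P$ lies inside a single old monochromatic component, so $|P|\le k$. Next, since $C$ is connected and meets $Z$, each such piece $P$ must be joined by an edge to some vertex of $C\cap Z$; as each vertex of $Z$ has degree at most $\Delta$, the number of such pieces is at most $\Delta|C\cap Z|$. Combining these two bounds gives $|C\setminus Z|\le \Delta k\,|C\cap Z|$, and therefore
\[
|C|\le (\Delta k+1)\,|C\cap Z|.
\]

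To conclude, I would sum this inequality over all monochromatic components $C$ of the new coloring that meet $Z$. These components are pairwise disjoint, so their intersections with $Z$ form disjoint subsets of $Z$, and summing yields a total of at most $(\Delta k+1)|Z|$ vertices in the union, as required. There is no substantive obstacle; the only point that needs a moment's care is the observation that a piece $P$ of $G[C\setminus Z]$ satisfies $|P|\le k$, which follows immediately from the fact that vertices outside $Z$ keep their original colors.
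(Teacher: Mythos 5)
Your proof is correct. The paper omits the proof, stating the lemma is obvious; the argument you give (treating each new monochromatic component $C$ meeting $Z$, noting that each piece of $G[C\setminus Z]$ is an old monochromatic set of size at most $k$, and counting at most $\Delta\lvert C\cap Z\rvert$ such pieces via the degree bound) is exactly the natural counting argument the authors are implicitly appealing to.
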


We use the following theorem of Geelen et al.~\cite{GGRSV2009} on odd minors.
\begin{theorem}[Geelen et al.~{\cite[Theorem 13]{GGRSV2009}}]\label{thm:oddminor}
  There is a constant $c$ such that if $G$ contains a $K_t$-minor $\alpha$
  where $t=\lceil c \ell \sqrt{\log 12\ell}\rceil$, 
  then either $G$ contains an odd $K_\ell$-minor,
  or there exists a set $X$ of vertices with $\abs{X}<8\ell$ such that the (unique) block $U$ of $G-X$ that intersects all branch vertices of $\alpha$ disjoint from $X$ is bipartite.
\end{theorem}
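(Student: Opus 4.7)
The plan is to leverage the given $K_t$-minor $\alpha$ either to construct an odd $K_\ell$-minor directly from its branch structure, or else to exhibit a small hitting set whose removal reveals a bipartite block. For each branch vertex $v$ of $K_t$, pick a spanning tree $T_v$ of $\alpha(v)$ and let $c_v$ be the canonical proper $2$-coloring coming from its bipartition. For each pair $\{u,w\}$ of branch vertices of $K_t$, call the pair \emph{good} if some edge of $G$ from $V(\alpha(u))$ to $V(\alpha(w))$ has its two endpoints receiving the same color under $c_u$ and $c_w$, and \emph{bad} otherwise. By the definition of odd minor, any clique of $\ell$ branch vertices all of whose pairs are good immediately yields an odd $K_\ell$-minor: use the trees $T_v$ with the colorings $c_v$, and for each good pair choose a monochromatic witness edge.

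The first main step is to boost the density of good pairs by recoloring. Swapping the two color classes of a single $T_v$ toggles the good/bad status of every pair incident to $v$, so a standard greedy or probabilistic sign-flipping argument yields colorings under which at least $\tfrac{1}{2}\binom{t}{2}$ pairs are good. The \emph{good auxiliary graph} $H$ on the $t$ branch vertices then has average degree at least $(t-1)/2$, so by the Kostochka--Thomason density-to-minor bound, $H$ contains a $K_\ell$-minor whenever $t \geq c\ell\sqrt{\log 12\ell}$ for a sufficiently large absolute constant $c$. Lifting this minor back into $G$ (merging branch sets along monochromatic connecting edges, which preserves a proper $2$-coloring on each combined tree because a merge via a monochromatic edge respects the bipartition) yields the desired odd $K_\ell$-minor.

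The alternative conclusion arises when no recoloring can drive the number of good pairs up to $\tfrac{1}{2}\binom{t}{2}$. In that case the obstruction is global: there is a canonical $2$-coloring of most of $G$, together with a small set $X$ of at most $8\ell$ ``parity-obstructing'' vertices whose deletion eliminates every remaining odd cycle supported on the branch sets. The hitting-set bound $\abs{X}<8\ell$ is the delicate point: for each residual parity defect one exhibits a short odd cycle meeting the branch sets, and a packing/covering argument shows all such cycles can be simultaneously hit by fewer than $8\ell$ vertices, after which the block of $G-X$ containing the surviving branch vertices is bipartite. I expect this final step to be the main obstacle; the sign-flipping argument and the Kostochka--Thomason bound are classical tools, but tightly controlling the size of $X$ requires a careful identification of the obstructions and a correspondingly tight covering argument.
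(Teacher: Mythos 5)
The paper does not prove Theorem~\ref{thm:oddminor}; it is quoted directly from Geelen, Gerards, Reed, Seymour, and Vetta~\cite{GGRSV2009}, so there is no in-paper argument to compare against. Judged on its own terms, your plan has a genuine gap in the lifting step. The claim that ``a merge via a monochromatic edge respects the bipartition'' is the reverse of what actually happens: if $x\in T_u$ and $y\in T_v$ receive the same colour and $xy\in E(G)$, then gluing $T_u$ and $T_v$ along $xy$ yields a tree whose inherited $2$-colouring is \emph{not} proper at $xy$, and you are forced to flip the colouring on one side. That flip then reverses the good/bad status of every other pair incident to the flipped tree. So when you contract a branch set $B_i\subseteq V(H)$ of the $K_\ell$-minor in the auxiliary graph, the flips inside $B_i$ are dictated (up to a global flip of $B_i$) by a proper $2$-colouring of a spanning tree of $H[B_i]$, and nothing guarantees that the remaining $\ell$ global flips can be chosen so that all $\binom{\ell}{2}$ cross-pairs between distinct $B_i$, $B_j$ stay monochromatic. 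Since Kostochka--Thomason at edge density $1/2$ gives only a $K_\ell$-\emph{minor} of $H$, not a $K_\ell$-subgraph, nontrivial merges are unavoidable, so this is a real obstruction rather than a presentation issue.

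This also exposes a structural flaw in your case split. Your sign-flipping argument \emph{always} succeeds: for a uniformly random choice of which $T_v$ to switch, each designated connecting edge is monochromatic with probability exactly $1/2$, so some choice yields at least $\binom{t}{2}/2$ good pairs. Hence the ``alternative conclusion'' branch as you phrase it never fires, yet the dichotomy in the theorem is not vacuous: any bipartite $G$ containing a $K_t$-minor has no odd $K_3$-minor, because an odd minor of a bipartite graph is again bipartite (each properly $2$-coloured tree $T_v$ is either aligned or anti-aligned with the host bipartition, and a monochromatic connecting edge forces opposite alignments). For such $G$, every step of your argument except the lifting must go through, which isolates exactly where the proof breaks. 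The argument in~\cite{GGRSV2009} is considerably more careful about parity: it distinguishes pairs $T_u,T_v$ all of whose connecting edges have the same parity from pairs admitting connecting edges of both parities (the latter serve as parity-flexible links on which no flip is forced), and the bound $\abs{X}<8\ell$ emerges from a covering argument on the parity-rigid structure rather than from a direct recolouring-plus-density argument.
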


Our main theorem, Theorem~\ref{3color odd minor free}, is an immediate corollary of the following stronger theorem by taking $Y=\emptyset$.

\begin{theorem}
For every graph $W$ and positive integer $\Delta$, 
there exists an integer $\eta$ such that 
if $W$ is not an odd minor of a graph $G$ of maximum degree at most $\Delta$, 
then for every subset $Y$ of $V(G)$ with $\abs{Y}\le \eta$, 
every $3$-coloring of $Y$ can be extended to that of $G$ satisfying the following.

	\begin{enumerate}[(i)]
		\item The union of all monochromatic components of~$G$ meeting $Y$ contains at most $\abs{Y}^2\Delta$ vertices.
		\item Every monochromatic component of~$G$ contains at most $\eta^2\Delta$ vertices.
	\end{enumerate}
\end{theorem}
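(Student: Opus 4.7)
The plan is to prove the stronger theorem by induction on $|V(G)|$, with $\eta$ chosen very large compared to the constants arising from the two structural results in play. Setting $\ell=|V(W)|$ and $t=\lceil c\ell\sqrt{\log 12\ell}\rceil$ as in Theorem~\ref{thm:oddminor}, observe that an odd $K_\ell$-minor of $G$ would contain an odd $W$-minor, so $G$ has no odd $K_\ell$-minor. Consequently, whenever $G$ contains a $K_t$-minor, Theorem~\ref{thm:oddminor} furnishes $X\subseteq V(G)$ with $|X|<8\ell$ after whose deletion the block $U$ of $G-X$ containing the branch vertices (necessarily $|V(U)|\ge t-8\ell$) is bipartite.

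The main case of the induction is when $G$ has no tangle of order at least $\theta$ controlling a $K_t$-minor, where $\theta,\xi,\kappa,\rho$ come from Theorem~\ref{structure thm for excluding minor locally planar} applied with $L=K_t$ and $d$ chosen large enough to satisfy Lemma~\ref{mono comp local}'s requirement (the Euler genus of the surface $\Sigma$ is bounded because $K_t$ does not embed in low-genus surfaces). Theorem~\ref{structure thm for excluding minor locally planar} then supplies a set $Z$ with $|Z|\le\xi$, a maximal segregation $\Se=\Se_1\cup\Se_2$ of $G-Z$, and a proper arrangement in $\Sigma$; the extended skeleton is a $d$-local outgrowth by $\kappa$ $(2\rho+3)$-rings of bounded maximum degree. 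Lemma~\ref{mono comp local} 3-colors the extended skeleton with monochromatic components of size at most $48 d^4(2\rho+3)\Delta^5$. For each vortex $(S,\Omega)\in\Se_2$, I extend the coloring into $S$: completing each monochromatic component on $\bar\Omega$ into a clique turns the vortex into (the vortical decomposition of) a $q$-necklace with $q$ bounded in $d$ and $\Delta$, so Lemma~\ref{tw extended necklace} bounds its tree-width and Theorem~\ref{3color bdd tw} extends the coloring with bounded components. Unioning yields a 3-coloring of $G-Z$; I then color $Z$ arbitrarily and finally recolor $Y$ to its prescribed values, using two applications of Lemma~\ref{recolor vertex} to bound the resulting monochromatic components.

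The exceptional case is when $G$ does have a tangle of order $\ge\theta$ controlling a $K_t$-minor. Then $G$ contains a $K_t$-minor, so Theorem~\ref{thm:oddminor} produces $X$ with $|X|<8\ell$ and a bipartite block $U$ of $G-X$ with $|V(U)|\ge t-8\ell$. I apply the induction hypothesis to the strictly smaller graph $G-(V(U)\setminus X)$ with pre-colored set $Y\cup X$, 2-color $V(U)$ by its bipartition so that every monochromatic component inside $U$ is a singleton, and splice the two colorings using Lemma~\ref{recolor vertex}. The hard part here is precisely the bookkeeping: I must verify that, after combining the inductive 3-coloring of $G-(V(U)\setminus X)$ with the bipartite 2-coloring of $U$ and the re-coloring at $X$, monochromatic components meeting $Y$ still fit inside $|Y|^2\Delta$ and all other components inside $\eta^2\Delta$, while $|Y\cup X|$ remains within the induction hypothesis's $\eta$ budget. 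This bookkeeping, together with the structural constants and the $8\ell$ surplus from Theorem~\ref{thm:oddminor}, is what forces the (large) recursive choice of $\eta$.
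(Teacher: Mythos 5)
Your high-level outline identifies the right ingredients (Theorem~\ref{thm:oddminor}, Theorem~\ref{structure thm for excluding minor locally planar}, Lemma~\ref{mono comp local}, and Lemma~\ref{recolor vertex}), but there are several concrete gaps that the paper's proof is specifically engineered to overcome and that your plan does not.

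First, your case dichotomy is not what Theorem~\ref{structure thm for excluding minor locally planar} gives you: the hypothesis there is the existence of a tangle of order at least $\theta$ that controls no $L$-minor, which is not implied by ``no tangle of order $\ge\theta$ controls an $L$-minor'' (there may simply be no tangle of large order). The paper gets this tangle for free because it defines $\T$ directly from $Y$ (a separation $(A,B)$ is in $\T$ iff the $B$-side carries at least $3\theta$ vertices of $Y$). This construction has a second, indispensable role: since the segregation $\Se$ is $(\T-Z)$-central, every piece $(S,\Omega)\in\Se$ and every bag of a vortex contains at most about $4\theta$ vertices of $Y$. Without this you cannot apply the induction hypothesis to the pieces, because each piece's precolored set must be $\le\eta$, and a piece could otherwise swallow nearly all of $Y$.

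Second, and most seriously, your treatment of condition~(i) doesn't work. Condition~(i) is a bound that scales with $\lvert Y\rvert^2$, and when $\lvert Y\rvert$ is tiny it is extremely restrictive, much sharper than any bound in terms of $\eta$. ``Recoloring $Y$ to its prescribed values at the end via Lemma~\ref{recolor vertex}'' gives at best $\lvert Y\rvert(\Delta k+1)$ where $k$ is the monochromatic-component bound of the coloring before recoloring — but $k$ is on the order of $\eta^2\Delta$ (it comes from inductive calls with precolored sets of size $\Theta(\theta)$), so you get $\approx\lvert Y\rvert\eta^2\Delta^2$, not $\lvert Y\rvert^2\Delta$. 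The paper closes this gap with Claim~1: it proves (by an independent construction, placing a doubly-colored buffer $N_G(Y)$, $N_G(N_G[Y])$ around $Y$) that one may assume $\lvert Y\rvert>\eta/\Delta^2$, which is what makes the $\lvert Y\rvert^2\Delta$ accounting eventually come out. The same issue sinks your exceptional case: you precolor $Y\cup X$ on the smaller graph, and the induction hypothesis bounds components meeting $Y\cup X$ by $\lvert Y\cup X\rvert^2\Delta$, which exceeds $\lvert Y\rvert^2\Delta$ when $\lvert Y\rvert$ is small. Moreover, the paper does not remove all of $V(U)\setminus X$ at once: it splits $G-X-V(U)$ along the cut vertices of the block $U$ into pieces $C_i$, shows each induced separation $(A_i,B_i)$ lies in $\T$ (so $\lvert Y\cap V(A_i)\rvert<4\theta$ is automatic), and applies the induction to each $A_i$ with a precolored set of size $O(\theta)$ — the tangle is doing real work even in the bipartite-block case. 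You would need all of this to make the ``bookkeeping'' you defer actually close.
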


\begin{proof}
We may assume that $\Delta>1$.
Since $G$ does not have an odd $W$-minor, 
by Theorem~\ref{thm:oddminor}, 
there exist sufficiently large integers $c$ and $t=\lceil c \lvert V(W) \rvert \sqrt{\log{12 \lvert V(W) \rvert}} \rceil$ such that 
if $G$ has a $K_t$-minor $\alpha$, then 
it has a set $X$ of vertices such that $\abs{X}<8\abs{V(W)}$
and 
the (unique) block $U$ of $G-X$ intersecting all branch vertices of $\alpha$ is bipartite.
Let $L=K_t$.

Let $d=(5\Delta)^{2^g-1}(15\Delta)^{(32\Delta+8)2^g}$, where $g$ is the maximum genus of a surface in which $L$ cannot be drawn. 
(If $L$ is planar, then let $g=0$.)
Let $\kappa,\xi,\theta$ and $\rho$ be given by Theorem~\ref{structure thm for excluding minor locally planar} for $L$ and $d$.
We may assume that $\theta>8\abs{V(W)}+1$.
Let $M=48 d^4 (2\rho+3) (3\Delta+2\rho)^5$ and $\eta=2000 \rho \theta^3 M \Delta^6$.

We proceed by induction on $\abs{V(G)}$.
It is trivial if $\abs{V(G)}\le 1$, because $\eta\ge 1$ and $\Delta>0$. Thus we may assume that $\abs{V(G)}\ge 2$.
Let $Y$ be a subset of $V(G)$ with at most $\eta$ vertices.
We may assume that $Y$ is nonempty, because otherwise we can add one vertex to $Y$.
Let $c_Y:Y\to\{1,2,3\}$ be a given $3$-coloring of~$Y$. 
We say that a $3$-coloring $c$ of $G$ is \emph{$Y$-good} with respect to $c_Y$ if it extends $c_Y$ and satisfies the conditions 1 and 2 of the theorem.
Suppose that $G$ has no $Y$-good $3$-coloring with respect to $c_Y$.
Note that for the condition 2, it is unnecessary to consider monochromatic components meeting $Y$ because it follows from the condition 1.

For a subset $X$ of $V(G)$, we write $1_X$ to denote a $3$-coloring of $X$ coloring all vertices of $X$  by $1$. Similarly we define $2_X$ and $3_X$.

\medskip\noindent {\bf Claim 1:} $\lvert Y \rvert > \frac{\eta}{\Delta^2}$.

\smallskip
\noindent {\bf Proof of Claim 1:}
Let $Y_1=N_G(Y)$ and $Y_2=N_G(Y\cup Y_1)$.
Note that $\abs {Y_2 } \leq \abs{Y} \Delta(\Delta-1) \leq \eta$.
By permuting colors, we may assume that $3\in c_Y(Y)$.
We apply the induction hypothesis to $G-(Y \cup Y_1)$ with 
the $3$-coloring $1_{Y_2}$ of $Y_2$
to obtain a $Y_2$-good 3-coloring $c$ of~$G-(Y \cup Y_1)$.
Let $c'$ be a $3$-coloring of $G$ such that 
\[
c'(v)=
\begin{cases}
  c_Y(v) &\text{if }v\in Y,\\
  2 & \text{if }v\in Y_1,\\
  c(v) &\text{if }v\notin Y\cup Y_1.
\end{cases}
\]
No monochromatic component of $G$ with respect to $c'$ can meet both $Y\cup Y_1$ and $Y_2$ because $Y_1$ and $Y_2$ are colored differently.
Since $c$ is $Y_2$-good, 
every monochromatic component of~$G$ disjoint from $Y\cup Y_1$ contains at most $\eta^2\Delta$ vertices.
The union of the monochromatic components of~$G$ meeting $Y\cup Y_1$ contains at most $\abs{ Y \cup Y_1 } \leq (\Delta+1) \abs Y $ vertices.
If $(\Delta+1)\abs Y\le \abs{Y}^2\Delta$, then it implies the condition 1 and condition 2 for monochromatic components meeting $Y\cup Y_1$.
If $\lvert Y \rvert \geq 2$, then $\lvert Y \cup Y_1 \rvert \leq \lvert Y \rvert^2 \Delta$; if $\lvert Y \rvert=1$, then the monochromatic component of~$G$ meeting $Y$ contains exactly one vertex of color $3$.
Therefore $c'$ is $Y$-good, contradicting our assumption.
$\Box$

\medskip\noindent {\bf Claim 2:} There exists no separation $(A,B)$ of~$G$ of order less than $\theta$ such that $\lvert (V(A)-V(B)) \cap Y\rvert \geq 3\theta$ and $\lvert  (V(B)-V(A)) \cap Y\rvert \geq 3\theta$.

\smallskip
\noindent {\bf Proof of Claim 2:}
Suppose that $G$ has a separation $(A,B)$ of order less than $\theta$ such that 
$a=\lvert (V(A)-V(B)) \cap Y \rvert \geq 3\theta$ and $b=\lvert (V(B)-V(A)) \cap Y \rvert \geq 3\theta$.
Let 
$Y_A = (Y\cup V(B))\cap V(A)$ and 
$Y_B = (Y \cup V(A))\cap V(B)$.
Then, 
\begin{align*}
  \abs{Y}^2-(\abs{Y_A}^2+\abs{Y_B}^2)
  &\ge (a+b)^2 -(a+\theta)^2-(b+\theta)^2\\
  &= \frac{2ab}{3}+\frac{2ab}{3}+\frac{2ab}{3}-2(a+b)\theta -2\theta^2\\
  &\ge 2a\theta+2b\theta+6\theta^2-2(a+b)\theta-2\theta^2=4\theta^2>0.
\end{align*}

Now we shall construct a desired $3$-coloring of~$G$.
We first color vertices in $Y_A-Y(=Y_B-Y)$ arbitrary.
Since $\abs {V(A)} \le\abs{ V(G)} - 3\theta$ and $\abs{Y_A} \leq \abs{Y}-2\theta$, we can apply the induction hypothesis to the graph $A$ with $Y_A$ precolored.
Similarly, we can further apply the induction hypothesis to the graph $B$ with $Y_B$ precolored.
So by merging the 3-colorings of~$A$ and $B$, we obtain a $3$-coloring of~$G$.
Let $U$ be the union of the monochromatic components of~$G$ either meeting $Y$, or meeting both $A$ and $B$.
Note that every component of~$U$ meets  $Y_A \cup Y_B$.
By the induction hypothesis, $U$ contains at most $(\lvert Y_A \rvert^2 + \lvert Y_B \rvert^2)\Delta \leq \lvert Y\rvert^2 \Delta$ vertices.
On the other hand, the induction hypothesis implies that every monochromatic component of~$G$ disjoint from $Y$ contains at most $\eta^2 \Delta$ vertices.
Therefore, $G$ has a $Y$-good 3-coloring, contradicting our assumption.
$\Box$

\medskip
We define $\T$ to be the set of all separations $(A,B)$ of~$G$ of order less than $\theta$ such that $\lvert (V(B)-V(A)) \cap Y \rvert \geq 3 \theta$.

\medskip\noindent {\bf Claim 3:} $\T$ is a tangle in $G$ of order $\theta$.

\smallskip
\noindent {\bf Proof of Claim 3:}
Observe that there exists no separation $(A,B)$ of order less than $\theta$ such that $\lvert (V(A)-V(B)) \cap Y \rvert < 3\theta$ and $\lvert (V(B)-V(A)) \cap Y \rvert < 3\theta$, since otherwise $\lvert Y \rvert < 7\theta \leq \frac{\eta}{\Delta^2}$, contradicting Claim 1.
So $\T$ satisfies the first tangle axiom.

Suppose that there exist $(A_j,B_j) \in \T$ for $1 \leq j \leq 3$ such that $A_1 \cup A_2 \cup A_3 = G$.
By Claim 2, $\lvert (V(A_j)-V(B_j)) \cap Y \rvert < 3 \theta$ for $1 \leq j \leq 3$.
So $\lvert V(A_j) \cap Y \rvert < 4\theta$ for $1 \leq j \leq 3$.
As a result, $\lvert Y \rvert \leq \sum_{j=1}^3 \lvert Y \cap V(A_j) \rvert < 12 \theta \leq \frac{\eta}{\Delta^2}$, a contradiction.
Hence the second tangle axiom holds.

If $V(A)=V(G)$ for some $(A,B) \in \T$, then $\lvert Y \rvert < 4\theta \leq \frac{\eta}{\Delta^2}$ by Claim 2, a contradiction.
Therefore, $\T$ is a tangle of order $\theta$.
$\Box$

\smallskip
\noindent{\bf Claim 4:} $\T$ controls no $L$-minor.

\smallskip
\noindent {\bf Proof of Claim 4:}
Suppose that $\T$ controls an $L$-minor $\alpha$. 
Since $\alpha$ is an $L$-minor in $G$, by Theorem~\ref{thm:oddminor}, 
there exists a set $X$ of vertices such that $\abs{X}\le 8\abs{V(W)}$ such that the unique block $U$ of $G-X$ intersecting all branch vertices of $\alpha$ 
disjoint from $X$ is bipartite.

Let $C_1,C_2,\ldots,C_m$ be the list of induced subgraphs of $G-X-V(U)$ such that
$G-X-V(U)$ is the disjoint union of $C_1,C_2,\ldots,C_m$,
each component of $C_i$ has the same set of neighbors in $U$,
and for $i\neq j$, 
the set of neighbors of $C_i$ in $U$
is not equal to that of $C_j$ in $U$.

As $U$ is a block of $G-X$, 
each $C_i$ has at most one neighbor in $U$.
For each $i\in \{1,2,\ldots,m\}$, 
Let $A_i$ be the 
subgraph of $G$ induced by the union of $X$, $V(C_i)$, and the set of all neighbors of $V(C_i)$ in $U$.
Let $B_i$ be the subgraph of $G-E(A_i)$ induced on $V(G)-V(C_i)$.
Note that 
$(A_i,B_i)$ is a separation of $G$ such that 
$\abs{V(A_i)\cap V( B_i)}\le \abs{X}+1\le 8\abs{V(W)}+1<\theta$.
As $\T$ is a tangle of $G$ of order $\theta$, 
$(A_i,B_i)\in\T$ or $(B_i,A_i)\in\T$. Since $\T$ controls an $L$-minor $\alpha$
and all branch vertices of $\alpha$ disjoint from $X$ intersect $U$, 
$(A_i,B_i)\in\T$.
By the definition of $\T$, we deduce that 
\[
\abs{(V(B_i)-V(A_i))\cap Y}\ge 3\theta.
\]
By Claim 2, $\abs{(V(A_i)-V(B_i))\cap Y}<3\theta$.

First we properly color $U$ by colors $1$ and $2$
and color all vertices in $X$ by color $3$.
This coloring 
of $G[V(U)\cup X]$ has the property that 
each monochromatic component has at most $\abs{X}<\theta $ vertices.
Then we recolor vertices in $Y\cap (V(U)\cup X)$ by its given color. 
By Lemma~\ref{recolor vertex}, this new coloring of $G[V(U)\cup X]$
has the property that the union of all monochromatic components 
intersecting $Y\cap (V(U)\cup X)$ has at most $\eta(\Delta \theta+1)\le 2\eta \Delta \theta$ vertices.

For each $i\in\{1,2,\ldots,m\}$, 
let $Y_i'=(Y\cap V(A_i))\cup (V(A_i)\cap V(B_i))$.
Note that $\abs{Y_i'}<4\theta$.
By the induction hypothesis, 
there exists a $Y_i'$-good coloring $f_i$ of $A_i$
extending the coloring of $G[V(U)\cup X]$ given in the previous step
such that 
the union of all monochromatic components of $A_i$ in $f_i$ intersecting $Y_i'$
has at most $(4\theta)^2 \Delta$ vertices
and every monochromatic component of $A_i$ in $f_i$ has at most 
$\eta^2\Delta$ vertices.

Let $f$ be a $3$-coloring of $G$
obtained by combining the coloring of $G[V(U)\cup X]$ and the coloring $f_i$ for 
each $i\in\{1,2,\ldots,m\}$.
This coloring $f$ is well-defined
and furthermore 
the union of all monochromatic components in $G$
intersecting $Y\cup X\cup \bigcup_{i=1}^m (V(A_i)\cap V(U))$ has 
at most $(2\eta \Delta \theta) (4\theta)^2 \Delta^2 \le \abs{Y}^2\Delta$
vertices.
In addition, each monochromatic component in $G$
not intersecting  $Y\cup X\cup \bigcup_{i=1}^m (V(A_i)\cap V(U))$ 
has at most $\eta^2\Delta$ vertices.
This completes the proof.
$\Box$

\medskip

Now we may assume that $\T$ controls no $L$-minor.
By Theorem~\ref{structure thm for excluding minor locally planar}, there exist $Z \subseteq V(G)$ with $\lvert Z \rvert \leq \xi$ and a maximal $(\T-Z)$-central segregation $\Se=\Se_1 \cup \Se_2$ of~$G-Z$ properly arranged by an arrangement $\alpha$ in a surface $\Sigma$ in which $L$ cannot be drawn, where every $(S,\Omega) \in \Se_1$ has the property that $\lvert \bar{\Omega} \rvert \leq 3$, and $\lvert \Se_2 \rvert \leq \kappa$ and every member $(S,\Omega)$ in $\Se_2$ is a $\rho$-vortex with a vortical decomposition $(P_S,\X_S)$ of adhesion at most $\rho$ such that the extended skeleton of $\alpha$ of~$\Se$ in $\Sigma$ with respect to $\{(P_S,\X_S): (S,\Omega) \in \Se_2\}$, denoted by $G'$, is a $d$-local outgrowth by $\kappa$ $(2\rho+3)$-rings of the skeleton of~$\alpha$ of~$\Se$ in $\Sigma$ and  the maximum degree of~$G'$ is at most $\max\{3\Delta,2\rho+1\} \leq 3\Delta+2\rho$.

Let $c'$ be a $3$-coloring of~$G'$ given by Lemma~\ref{mono comp local} such that every monochromatic component of~$G'$ with respect to $c'$ contains at most $M$ vertices.
Let $G''$ be the graph obtained from the disjoint union of~$G'$ and $G[Z]$ by adding the edges of~$G$ between $Z$ and $V(G') \cap V(G)$.
Then we extend $c'$ to a 3-coloring $c''$ of~$G''$ by coloring every vertex in $Z$ by color 1.
Then each monochromatic component in $G''$ contains at most $\max(\abs Z,1) (M \Delta +1)$ vertices.
Since $\abs Z  < \theta$, we know that $\max(\abs Z,1)  (M\Delta  +1) \leq 2\theta M\Delta$.
Note that the maximum degree of~$G''$ is still at most $3\Delta+2\rho$.

For each $(S,\Omega) \in \Se_1$, let $Q_S = G[V(S) \cup Z]$
and \[Y_S=\bar\Omega\cup Z \cup (Y\cap V(S))).\]
Since $\abs{\bar{\Omega} \cup Z}\le \abs{Z}+3<\theta$,
$(Q_S,G-(V(S)-\bar\Omega)-E(Q_S))$ is a separation of $G$ having order less than $\theta$.
Since $\Se$ is $(\T-Z)$-central, $(Q_S,G-(V(S)-\bar\Omega)-E(Q_S))\in \T$ and therefore $\abs{Y\cap V(S)}<4\theta$. So 
$\abs{Y_S} \le \abs{Z}+3+\abs{Y\cap V(S)}
< \xi+3+4\theta\le 6\theta\Delta\le \eta$.

For each $(S,\Omega) \in \Se_2$  and $1 \leq i \leq \abs{\bar{\Omega}}$, 
let $X_{S,i}$ be the $i$-th bag of~$\X_S$, which contains the $i$-th vertex $v_{S,i}$ in $\bar{\Omega}$;
let $Q_{S,i}=G[X_{S,i} \cup Z]$, 
$  B_{S,i} = Z \cup (X_{S,i}\cap (X_{S,i-1} \cup  X_{S,i+1} \cup \{v_{S,i}\}) )$ 
where 
$X_{S,0}=X_{S,\abs{\bar{\Omega}}+1}=\emptyset$;
let \[Y_{S,i}=B_{S,i}\cup (N_G(B_{S,i})\cap X_{S,i}) \cup (Y\cap X_{S,i}).\]
Note that there exists $(A,B) \in \T$ with $V(A) \cap V(B) = B_{S,i}$ and $G[X_{S,i}] \subseteq A$, since $\abs{B_{S,i}}\le 2\rho +1+\xi<\theta$ and $\Se$ is $(\T-Z)$-central.
Thus, $\abs{Y\cap X_{S,i}}<4\theta$ and therefore $\abs{Y_{S,i}}
\le \abs{B_{S,i}\cup N_G(B_{S,i})\cup (Y \cap X_{S,i})}
< \theta(\Delta+1)+4\theta \leq 6\theta\Delta \leq \eta$.

For $(S,\Omega)\in \Se_2$ and $1\le i\le \abs{\bar\Omega}$,  
let $x_{S,i,1},x_{S,i,2},x_{S,i,3}$ be the vertices of $G'$ mentioned in the definition of the extended skeleton, and let $W_{S,\Omega}$ be a minimum set with $W_{S,\Omega} \subseteq \{x_{S,i,1},x_{S,i,2},x_{S,i,3}\}$ and $\lvert W_{S,\Omega} \rvert  \leq \min\{\abs {Y \cap X_{S,i} \cap N_G(B_{S,i}) },3\}$.

Now we define a new $3$-coloring $c'''$ of~$G''$ by the following rule.
\begin{itemize}
	\item $c'''(v)=c_Y(v)$ if $v \in Y \cap V(G'')$.
	\item For $(S,\Omega) \in \Se_2$ and $1\le i\le \abs{\bar{\Omega}}$, 
	  define $c'''$ on $\{x_{S,i,1},x_{S,i,2},x_{S,i,3}\}$ such that $c'''(\{x_{S,i,1},x_{S,i,2},x_{S,i,3}\}) \supseteq c_Y(Y \cap X_{S,i} \cap N_G(B_{S,i}))$ and $c'''(v)=c''(v)$ for every $v \in \{x_{S,i,1},x_{S,i,2},x_{S,i,3}\}-W_{S,\Omega}$.
	\item $c'''(v)=c''(v)$ for other vertices of $G''$.
\end{itemize}
Let \begin{align*}
 Y' 
 & = \{v \in V(G''): c'''(v) \neq c''(v)\} \cup (Y \cap V(G'')) \cup Z \\ 
 &\cup \bigcup_{(S,\Omega) \in \Se_1, Y \cap V(S)-\bar{\Omega} \neq \emptyset}\bar{\Omega},
\end{align*}
\begin{align*}
 Y_1 
 & = \{v\in Y: v\in V(S)-\bar\Omega\text{ for some }(S,\Omega)\in \Se_1\} \\ 
 &\cup \{v \in Y: v \in X_{S,i}-(B_{S,i} \cup N_G(B_{S,i})) \text{ for some } (S,\Omega) \in \Se_2, 1 \leq i \leq \abs{\bar{\Omega}}\},
\end{align*}
and $Y_2=Y-Y_1$.
Since $X_{S,i} \cap N_G(B_{S,i})$ are pairwise disjoint for different pairs of  $(S,\Omega)\in\Se_2$ and $i$, $\abs{Y'} \leq \abs{Y_2} + \theta+3\abs{Y_1} \leq 4 \abs{Y}$.
Hence, the union of the monochromatic components in $G''$ with respect to $c'''$ intersecting $Y'$ contains at most $4\abs{Y} ((3\Delta+2\rho)(2\theta M\Delta )+1) \leq 48\rho\theta M\abs{Y} \Delta^2$ by Lemma~\ref{recolor vertex}.
And every monochromatic component in $G''$ with respect to $c'''$ disjoint from $Y'$ has at most $2\theta M\Delta$ vertices.

For $(S,\Omega)\in \Se_1$, let $c_S$ be a $3$-coloring of $Y_S$ such that \[
c_S(v)=
\begin{cases}
  c_Y(v) & \text{if }v\in Y,\\
  c'''(v) & \text{if }v\in \bar\Omega\cup Z,
\end{cases}
\]  for $v\in Y_S$.
As $\abs{Y_S}\le \eta$, we can apply the induction hypothesis to $Q_S$ with the $3$-coloring $c_S$ to obtain a $Y_S$-good $3$-coloring $c_S'$ of $Q_S$.

For $(S,\Omega)\in \Se_2$ and $1\le i\le \abs{\bar\Omega}$, let $c_{S,i}$ be a $3$-coloring of $Y_{S,i}$ such that \[
c_{S,i}(v)=
\begin{cases}
  c_Y(v) & \text{if }v\in Y,\\
  c'''(v) &\text{if }v\in \bar\Omega\cup Z\cup (X_{S,i}\cap (X_{S,i-1}\cup X_{S,i+1})),\\
  c'''(x_{S,i,1}) & \text{if }v\in (N_G(B_{S,i})\cap X_{S,i})-Y,
\end{cases}
\] for $v\in Y_{S,i}$. As $\abs{Y_{S,i}}\le \eta$, we can apply the induction hypothesis to $Q_{S,i}$ with the $3$-coloring $c_{S,i}$ to obtain a $Y_{S,i}$-good $3$-coloring $c_{S,i}'$ of $Q_{S,i}$.

Let $c$ be a $3$-coloring of $G$ such that 
\[c(v)=
\begin{cases}
  c'''(v) & \text{if } v\in V(G''), \\
  c_S'(v) & \text{if } v\in V(S)-\bar\Omega \text{ for some }(S,\Omega)\in \Se_1,\\
  c_{S,i}'(v) & \text{if } v\in X_{S,i}-B_{S,i}
  \\
  &\quad\text{ for some }(S,\Omega)\in \Se_2\text{ and }1\le i\le \abs{\bar\Omega}
\end{cases}
\]
for $v\in V(G)$.

We now claim that $c$ is a $Y$-good $3$-coloring of $G$.
We say that a subgraph $R$ of $G$ is \emph{hiding} if either there exists $(S,\Omega)\in \Se_1$ such that $V(R) \subseteq V(S)-\bar{\Omega}$, or there exists $(S,\Omega) \in \Se_2$ and $1 \leq i \leq \abs{\bar{\Omega}}$ such that $V(R) \subseteq X_{S,i}-(B_{S,i} \cup N_G(B_{S,i}))$.

Let $U$ be the union of monochromatic components of $G$ meeting $Y$.
For the condition 1, we need to show that $\abs{V(U)}\le \abs{Y}^2\Delta$.

Firstly let us count the vertices of $U$ that are in hiding components.
For each hiding monochromatic component $R$, $R$ contains a vertex in $Y_1$ and has at most $25\theta^2\Delta^3$ vertices by the properties of $c_S'$ and $c_{S,i}'$. 
Thus, $U$ has at most $25\theta^2\Delta^3\abs{Y_1}$ vertices in hiding components.

Secondly let us count vertices of $U$ in non-hiding components.
Let $U'$ be the graph obtained from $U$ by deleting $V(U) \cap V(S)-\bar\Omega$ and adding edges on $V(U) \cap \bar{\Omega}$ such that $U'[\bar{\Omega}]$ is a complete subgraph for every $(S,\Omega) \in \Se_1$, 
and identifying the vertices in 
$V(U)\cap (X_{S,i}-B_{S,i})$
of color $j$ in the $3$-coloring $c$ into a vertex $u_{S,i,j}$ for each $(S,\Omega) \in \Se_2$, $1 \leq i \leq \lvert \bar{\Omega} \rvert$ and $1 \leq j \leq 3$.
Note that $U'$ is isomorphic to a subgraph of $G''$.
Furthermore, for every $(S,\Omega) \in \Se_2$, $1 \leq i \leq \abs{\bar{\Omega}}$ and $1 \leq j \leq 3$, whenever $u_{S,i,j}$ exists, there exists $k$ with $1 \leq k \leq 3$ such that $c'''(x_{S,i,k})=j$, by the definition of $c'''$.
So we may assume that $U'$ is a subgraph of $G''$ with the coloring $c'''$.
Every component of $U'$ meets $Y'$, since every non-hiding component of $U$ either meets $(Y \cap V(G''))\cup Z$, or meets $\bar{\Omega}$ for some $(S,\Omega) \in \Se_1$ with $Y \cap V(S)-\bar{\Omega} \neq \emptyset$, or meets both $Y \cap X_{S,i}$ and $X_{S,i} \cap N_G(B_{S,i})$ for some $(S,\Omega) \in \Se_2$ and $1 \leq i \leq \abs{\bar{\Omega}}$.
Therefore, $U'$ contains at most $48\rho\theta M \abs{Y} \Delta^2$ vertices.

For each vertex $v$ in a non-hiding component of $U$ but not in $U'$, $v$ is either 
	\begin{itemize}
		\item contained in a monochromatic component of $Q_S$ meeting $Y_S \cap V(U')$ with respect to $c_S'$ for some $(S,\Omega) \in \Se_1$, or 
		\item contained in a monochromatic component of $Q_{S,i}$ meeting $X_{S,i} \cap N_G(B_{S,i})$ with respect to $c_{S,i}'$ for some $(S,\Omega) \in \Se_2$ and $1 \leq i \leq \abs{\bar{\Omega}}$ such that $\{x_{S,i,1},x_{S,i,2},x_{S,i,3}\} \cap V(U') \neq \emptyset$.
	\end{itemize}
Since $\Se$ is maximal, for every vertex $v$ of $G-Z$, there exist at most $\Delta$ societies $(S,\Omega) \in \Se_1$ such that $v \in V(S)$, so there are at most $\abs{U'} \Delta$ such societies in $\Se_1$ mentioned in the former case; since $\abs{\bigcup_{(S,\Omega)\in \Se_2} \bigcup_{1 \leq i \leq \abs{\bar{\Omega}}} \{x_{S,i,1},x_{S,i,2},x_{S,i,3}\} \cap V(U')} \leq \abs{U'}$, so there are at most $\abs{U'}$ such $Q_{S,i}$ mentioned in the latter case.
By the properties of $c_S'$, the union of all monochromatic components mentioned in the former case contains at most $(5\theta\Delta)^2 \cdot \abs{U'} \Delta$ vertices; by the properties of $c_{S,i}'$, the union of all monochromatic components mentioned in the latter case contains at most $(5\theta\Delta)^2 \cdot \abs{U'}$ vertices.
Hence, the number of vertices in some non-hiding components of $U$ but not in $U'$ contains at most $25\theta^2\abs{U'}\Delta^2(\Delta+1) \leq 1200\rho\theta^3M\abs{Y}\Delta^4(\Delta+1)$ vertices.

Consequently, $U$ contains at most $25\theta^2\Delta^3 \abs{Y_1} + 1200\rho \theta^3 M \abs{Y} \Delta^4(\Delta+1) \leq 2000\rho\theta^3M\abs{Y}\Delta^5 \leq \abs{Y}^2\Delta$ vertices, by Claim 1 and the assumption $\Delta \geq 2$.
This proves that $c$ satisfies condition 1.

Let $R$ be a monochromatic component of $G$ not meeting $Y$ with respect to $c$.
For condition 2, it suffices to show that $R$ contains at most $\eta\Delta^2$ vertices.
It is clear that $R$ contains at most $\max\{25\theta^2\Delta^3, \eta\Delta^2\} \leq \eta \Delta^2$ vertices if $R$ is hiding by the properties of $c_S$ and $c_{S,i}$.
So we may assume that $R$ is not hiding.

Construct $R'$ from $R$ as we constructed $U'$ from $U$.
That is, let $R'$ be the graph obtained from $R$ by deleting $V(R) \cap V(S)-\bar\Omega$ and adding edges on $V(R) \cap \bar{\Omega}$ such that $R'[\bar{\Omega}]$ is a complete subgraph for every $(S,\Omega) \in \Se_1$, 
and identifying the vertices in $V(R) \cap X_{S,i} \cap N_G(B_{S,i})$ of color $j$ in the $3$-coloring $c$ into a vertex $u_{S,i,j}$ for each $(S,\Omega) \in \Se_2$, $1 \leq i \leq \lvert \bar{\Omega} \rvert$ and $1 \leq j \leq 3$.
We may again assume that $R'$ is a subgraph of $G''$ with the coloring $c'''$.
Since $R$ is connected, $R'$ is connected.
Hence, $R'$ is a monochromatic component of $G''$ with respect to $c'''$ and contains at most $48\rho\theta M \abs{Y} \Delta^2$ vertices.

For each vertex $v$ in $R$ but not in $R'$, $v$ is either 
	\begin{itemize}
		\item contained in a monochromatic component of $Q_S$ meeting $Y_S \cap V(R')$ with respect to $c_S'$ for some $(S,\Omega) \in \Se_1$, or 
		\item contained in a monochromatic component of $Q_{S,i}$ meeting $X_{S,i} \cap N_G(B_{S,i})$ with respect to $c_{S,i}'$ for some $(S,\Omega) \in \Se_2$ and $1 \leq i \leq \abs{\bar{\Omega}}$ such that $\{x_{S,i,1},x_{S,i,2},x_{S,i,3}\} \cap V(R') \neq \emptyset$.
	\end{itemize}
Therefore, the same argument shows that the number of vertices of $R$ but not in $R'$ is at most $25\theta^2\abs{R'}\Delta^2(\Delta+1)$ vertices.
As a result, $R$ contains at most $\abs{R'}(1+25\theta^2\Delta^2(\Delta+1)) \leq  2000\rho\theta^3M\abs{Y_2}\Delta^5 \leq \eta \Delta^2$.
This shows that $c$ satisfies condition 2 and completes the proof.
\end{proof}

\section{Concluding remarks}
\label{sec:remarks}

We remark that Theorems~\ref{3color bdd tw} and \ref{3color minor free} are best possible in the sense that it is impossible to partition the vertex set into three sets such that each set induces a subgraph of bounded diameter.
The following observation is due to Esperet and Joret.
Recall that every graph with bounded tree-width does not contain a large grid as a minor.

\begin{theorem}[Esperet and Joret {[private communication, 2015]}] \label{no bdd diameter}
For every positive integers $w,d$, there exists a graph $G$ of tree-width at most $w$ such that for every $w$-coloring of~$G$, there exists a monochromatic component of~$G$ with diameter greater than $d$.
\end{theorem}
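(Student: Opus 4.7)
I would prove Theorem~\ref{no bdd diameter} by induction on $w$, producing for each pair $(w,d)$ of positive integers an explicit graph $G_{w,d}$ of tree-width at most $w$ in which every $w$-coloring admits a monochromatic component of diameter greater than $d$. For the base case $w=1$, take $G_{1,d}=P_{d+2}$: any graph of tree-width at most $1$ is a forest, and the unique $1$-coloring makes the whole graph a single monochromatic component of diameter $d+1>d$.

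For the inductive step, assume that a graph $G_{w-1,d'}$ with the required property has been constructed for every $d'$. Choose $d'$ and $N$ sufficiently large in terms of $w$ and $d$, and build $G_{w,d}$ by taking $N$ disjoint copies $G^{(1)},\dots,G^{(N)}$ of $G_{w-1,d'}$ together with a ``spine'' path $p_1 p_2 \cdots p_N$, joining each $p_i$ by a single edge to a designated vertex of $G^{(i)}$. Because adding a pendant does not increase the tree-width of a graph, and the pieces are glued along the spine in a tree-structured way, $G_{w,d}$ has tree-width at most $w$ (in fact at most $w-1$).

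Given any $w$-coloring of $G_{w,d}$, the argument splits into two cases. In the first case, some copy $G^{(i)}$ uses at most $w-1$ colors; applying the inductive hypothesis to this copy with parameter $d'=d+1$ yields a monochromatic component $H$ of $G^{(i)}$ of diameter greater than $d+1$. Since the only edge between $G^{(i)}$ and the rest of $G_{w,d}$ is the pendant edge incident to $p_i$, the monochromatic component of $G_{w,d}$ containing $H$ still has diameter greater than $d$. In the second case, every copy $G^{(i)}$ uses all $w$ colors. The plan here is to argue that, for $N$ sufficiently large, a pigeonhole argument applied to the spine, combined with the structural constraints imposed by the fact that every color must appear in every attached copy, yields a monochromatic component that extends over many consecutive spine vertices and hence has diameter greater than $d$.

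The main obstacle is precisely the second case: without further care the spine can be freely alternately colored, breaking any long monochromatic run, and the single pendant attachment is too weak to force the spine colors. Overcoming this is expected to require either a more intricate wiring than a single pendant edge (for instance, replacing the pendant by a small clique attachment, or joining each $p_i$ to several designated vertices of $G^{(i)}$, while verifying that tree-width remains at most $w$), or a Ramsey-type argument that reapplies the inductive hypothesis internally to many sub-configurations of consecutive pieces along the spine. Tuning the parameters $N,d'$ so that both cases close simultaneously while maintaining tree-width at most $w$ constitutes the core technical step.
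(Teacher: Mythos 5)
Your plan has the right high-level shape (induction on $w$, attaching many copies of the $(w-1)$-construction and forcing one of them to be missing a color), but you correctly identify that your second case does not close, and that gap is real: with a single pendant edge between $p_i$ and a designated vertex of $G^{(i)}$, the colors on the spine are completely unconstrained by what happens inside the pieces, so a $2$-coloring of the spine already kills any long monochromatic run. The paper's construction fixes this with two ideas that your sketch is missing. First, the attachment between a ``parent'' vertex and a ``child'' copy is a complete join: a single vertex $u$ of the parent piece is made adjacent to \emph{every} vertex of the child copy. This is what forces color propagation: once every copy must use all $w$ colors (your Case~2), the child copy contains a vertex of the same color as $u$, and that vertex is adjacent to $u$. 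Second, the scaffold is not a path but a rooted $n$-ary tree where $n=\lvert V(G_{w-1})\rvert$, each non-leaf node carries a copy of $G_{w-1}$, and the $j$-th vertex of a copy is joined to the entire copy at the $j$-th child. The branching is essential: you cannot predict in advance \emph{which} vertex of a copy will receive the distinguished color, so every vertex must have its own child copy hanging from it. With these two modifications, starting from an apex vertex $v$ joined to the whole root copy, one walks down a root-to-leaf path of length $d$ in the tree while always stepping to a same-colored neighbor, producing a monochromatic path of length $d+1$. Tree-width control also changes: each block of the resulting graph is a copy of $G_{w-1}$ plus one dominating vertex, so the tree-width goes up by exactly one at each step, reaching $w$ (your pendant construction would keep it at $w-1$, which is legal but useless since the coloring argument fails). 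In short, the missing idea is ``universal-vertex attachment fanned out over an $n$-ary tree,'' not a clique attachment or a Ramsey argument along a path.
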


\begin{proof}
We shall construct graphs $G_i$ of tree-width at most $i$ for every $i \geq 1$ such that every $i$-coloring of~$G$ has a monochromatic component of diameter greater than $d$ recursively. 
Define $G_1$ to be the path on $d$ vertices.
Clearly, $G_1$ has tree-width one and every $1$-coloring of~$G_1$ contains a monochromatic component of diameter greater than $d$.

Assume that we have constructed the graph $G_{i-1}$ of tree-width at most $i-1$ such that every $(i-1)$-coloring of~$G$ has a monochromatic component of diameter greater than $d$.
Let $n=\abs{V(G_{i-1})}$.
Let $T$ be the rooted $n$-ary tree with root $r$ such that every internal node of~$T$ has degree $n$, and the distance between $r$ and any leaf of~$T$ is $d$.
For every node $t$ of~$T$, we create a copy $H_t$ of~$G_{i-1}$, and we denote the vertices of~$H_t$ by $u_{t,1},\ldots,u_{t,n}$.
For every internal node $t$ of~$T$, we denote the children of~$t$ by $c_{t,1},c_{t,2},\ldots,c_{t,n}$.
Then we construct $G_i$ from the disjoint union of $H_t$ for all nodes $t$ of $T$ by adding a new vertex $v$ adjacent to all vertices of~$H_r$ for the root  $r$  of $T$ and adding edges $u_{t,j}u'$ for every non-leaf $t$ of~$T$, $1 \leq j \leq n$ and $u' \in V(H_{c_{t,j}})$.

Now we prove that $G_i$ has the desired property.
Suppose that $f$ is a $i$-coloring of~$G_i$ such that every monochromatic component has diameter at most $d$.
As $G_{i-1}$ has the desired property, $V(H_t)$ receives exactly $i$ colors by $f$ for every vertex $t$ of~$T$.
In particular, each $H_t$ contains a vertex $x_t$ with $f(x_t)=f(v)$.
Since $T$ contains a path $rt_1t_2\cdots t_{d}$ of length $d$, $vx_rx_{t_1}x_{t_2}\cdots x_{t_{d}}$ is a monochromatic path of length $d+1$, a contradiction.

In addition, every block of~$G_i$ is obtained from a copy of~$G_{i-1}$ by adding a vertex.
So the tree-width of~$G_i$ is at most the one more than the tree-width of~$G_{i-1}$.
This completes the proof.
\end{proof}

Note that the graphs $G_2$ and $G_3$ mentioned in the proof of Theorem~\ref{no bdd diameter} are outerplanar and planar, respectively.
So Theorem~\ref{3color surface} cannot be improved in the same direction, either.
On the other hand, it is well known that every graph of tree-width at most $w$ contains a vertex of degree at most $w$ and hence can be properly colored by $w+1$ colors.
So Theorem~\ref{no bdd diameter} is the best possible.

Esperet and Joret [private communication, 2015] also point out that the construction of $G_3$ disproves the following conjecture of Ne\v{s}et\v{r}il and Ossona de Mendez \cite{no}, since long paths have large tree-depth.

\begin{conjecture}[{\cite[Conjecture 7.1]{no}}]\label{con:no}
There exists a constant $t$ such that one can color the vertices of every planar graph by 3 colors in such a way that no monochromatic component will have tree-depth greater than $t$.
\end{conjecture}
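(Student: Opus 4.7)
The plan is to refute Conjecture~\ref{con:no} using the graph $G_3$ constructed in the proof of Theorem~\ref{no bdd diameter}, exploiting the elementary fact that the path on $n$ vertices has tree-depth $\lceil \log_2(n+1)\rceil$, which grows without bound in $n$.

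Given any purported constant $t$, I would apply Theorem~\ref{no bdd diameter} with $w=3$ and $d=2^t$ to obtain a graph $G_3$ such that every $3$-coloring of $G_3$ admits a monochromatic component $C$ of diameter greater than $d$. Such a component contains a shortest path on at least $d+2>2^t$ vertices, which has tree-depth at least $t+1$. Since tree-depth is monotone under taking subgraphs, $C$ itself has tree-depth greater than $t$, so $G_3$ — provided it is planar — witnesses the failure of Conjecture~\ref{con:no}.

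The one nontrivial step, and the main obstacle, is verifying the claimed planarity of $G_3$, which is asserted in the preceding remarks but is not part of the statement of Theorem~\ref{no bdd diameter}. I would check this by induction on the recursive construction. Since $G_2$ is outerplanar, every copy $H_t$ of $G_2$ admits a plane embedding with all vertices on a single face. Starting from a plane drawing of $H_r$ together with the universal vertex $v$ placed in its outer face (which preserves planarity, being an outerplanar graph with a single universal vertex added), one then recursively embeds the subtree rooted at each child $c_{r,j}$ inside a face incident with $u_{r,j}$, drawing $H_{c_{r,j}}$ outerplanarly with all vertices on the boundary of that face. All edges between $u_{r,j}$ and $V(H_{c_{r,j}})$ stay inside this face, so no crossings arise; iterating down the rooted tree produces a plane embedding of $G_3$ and completes the disproof.
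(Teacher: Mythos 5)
Your proposal is correct and follows the same route the paper takes: it cites Theorem~\ref{no bdd diameter}, uses the remark that the construction $G_3$ is planar, and observes that a long path (hence a monochromatic component of large diameter) has large tree-depth, so the conjecture fails. The paper dispatches this in a single sentence while you fill in the quantitative bridge (tree-depth of a path is $\lceil\log_2(n+1)\rceil$) and sketch the planarity induction, which is exactly the unstated content behind the paper's remark that $G_2$ is outerplanar and $G_3$ is planar.
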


We also remark that Theorem \ref{3color minor free} cannot be generalized to graphs with no $H$-topological minor in general.
The following is proved by using an idea of Alon et al.\ \cite{adov}.

\begin{theorem} \label{large for topo}
For positive integers $k,N$, there exists a $(4k-2)$-regular graph $G$ such that for every partition of $V(G)$ into $k$ sets $X_1,X_2,...,X_k$, there exists $i$ with $1 \leq i \leq k$ such that some component of $G[X_i]$ contains at least $N$ vertices.
\end{theorem}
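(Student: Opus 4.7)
The clean approach is to reformulate the problem via the line-graph construction, which has the fortunate feature that $L(H)$ of a $d$-regular graph $H$ is $(2d-2)$-regular; for $d=2k$ this gives precisely the required degree $4k-2$. A $k$-partition of $V(L(H))=E(H)$ is then the same data as a $k$-edge-coloring of $H$, and monochromatic components of the partition correspond exactly to connected components of the monochromatic edge-subgraphs of $H$, with ``size'' measured by the number of edges. So a lower bound on monochromatic component sizes reduces to a Nash--Williams-style statement about decompositions of $H$ into spanning forests with small components.

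Concretely, choose $H$ to be a $(2k)$-regular graph of girth at least $N$; such graphs exist by classical results of Erd\H{o}s and Sachs, and for $k=1$ one may just take a cycle of length at least $N$. Let $G:=L(H)$; then $G$ is $(4k-2)$-regular. Given any partition $X_1,\ldots,X_k$ of $V(G)=E(H)$, let $H_i$ denote the spanning subgraph of $H$ with edge set $X_i$; a monochromatic component of $G[X_i]$ corresponds to a connected component of $H_i$, and its size (as a subset of $V(G)$) equals the number of edges in that component of $H_i$.

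Suppose for contradiction that every monochromatic component of every $G[X_i]$ has fewer than $N$ vertices. Then every connected component of every $H_i$ has fewer than $N$ edges. Since any cycle in $H$ has length at least $\mathrm{girth}(H)\ge N$, any connected subgraph of $H$ with fewer than $N$ edges is acyclic. Hence each $H_i$ is a spanning forest of $H$. Letting $c_i$ denote the number of components of $H_i$ as a spanning subgraph on $V(H)$, we have $\abs{E(H_i)}=\abs{V(H)}-c_i$, so
\[
k\abs{V(H)} \;=\; \abs{E(H)} \;=\; \sum_{i=1}^{k}\abs{E(H_i)} \;=\; k\abs{V(H)} - \sum_{i=1}^{k} c_i,
\]
giving $\sum_{i=1}^{k} c_i = 0$, which contradicts $c_i\ge 1$.

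The argument is essentially complete once the line-graph reformulation is in place and the existence of $(2k)$-regular graphs of girth at least $N$ is invoked; there is no real obstacle beyond spotting the right construction. The degree equality $2(2k)-2=4k-2$ explains precisely why the constant $4k-2$ appears in the statement.
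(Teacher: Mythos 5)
Your proposal is correct and follows essentially the same route as the paper: both invoke Erd\H{o}s--Sachs to obtain a $2k$-regular graph of girth at least $N$, pass to its line graph to get a $(4k-2)$-regular $G$, identify $k$-partitions of $V(G)$ with $k$-edge-colorings of the base graph, and then observe that the edge count $|E(H)|=k|V(H)|$ forces some color class to contain a cycle (hence a component with at least $N$ edges). The only stylistic difference is that you phrase the counting step as a proof by contradiction via $\sum c_i=0$, whereas the paper argues directly that some color class has at least $|V(R)|$ edges and so cannot be a forest.
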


\begin{proof}
It was proved by Erd\H{o}s and Sachs \cite{es} that there exists a $2k$-regular graph $R$ with girth at least $N$.
Since $R$ contains $k\lvert V(R) \rvert$ edges, for any partition of $E(R)$ into $k$ sets, some set contains at least $\lvert V(R) \rvert$ edges and hence induces a subgraph $W$ of $R$ having a cycle.
Since the girth of $R$ is at least $N$, some component of $W$ contains at least $N$ edges.
Therefore, for every partition of $E(R)$ into $k$ sets, there exists a set in the partition such that some component induced by this set contains at least $N$ edges.

Define $G$ to be the line graph of $R$.
So $G$ is $(4k-2)$-regular.
Furthermore, every partition of $V(G)$ into $k$ sets $X_1,X_2,...,X_k$ corresponds to a partition of $E(R)$ into $k$ sets, so there exists $i$ with $1 \leq i \leq k$ such that $G[X_i]$ has a component with at least $N$ vertices.
\end{proof}

Since every graph of maximum degree at most $4k-2$ does not contain any graph with maximum degree at least $4k-1$ as a topological minor, Theorem \ref{large for topo} shows that Theorem \ref{3color minor free} cannot be generalized to topological minor-free graphs in general.

\paragraph{Acknowledgement.} 
The authors would like to thank anonymous referees for their valuable
suggestions, 
and thank Louis Esperet and Gwena\"{e}l Joret 
for bringing Theorem~\ref{no bdd diameter} and Conjecture~\ref{con:no}
to our attention and allowing us to include their proof into this paper. 
The authors would also like to thank Dong Yeap Kang for reading this manuscript carefully and finding a few typos.

\end{document}